\newtheorem{theorem}{Theorem}[section]
\newtheorem{lemma}[theorem]{Lemma}
\def\R{{\Bbb R}}
\def\ifl{\iffalse }
\def\bc{\begin{center}}       \def\ec{\end{center}}
\def\ba{\begin{array}}        \def\ea{\end{array}}
\def\be{\begin{equation}}     \def\ee{\end{equation}}
\def\bea{\begin{eqnarray}}    \def\eea{\end{eqnarray}}
\def\beaa{\begin{eqnarray*}}  \def\eeaa{\end{eqnarray*}}
\numberwithin{equation}{section}
\newtheorem{corollary}[theorem]{Corollary}
\numberwithin{equation}{section}
\begin{document}
\author{Hai-Yang Jin}
\address{School of Mathematics, South China University of Technology, Guangzhou 510640, China}
\email{mahyjin@scut.edu.cn}
\author{Tian Xiang}
\address{Institute for Mathematical Sciences, Renmin University of China, Bejing, 100872, China}
\email{txiang@ruc.edu.cn}

\title[Convergence rate for the chemotaxis-haptotaxis ]{Boundedness and  exponential convergence of a chemotaxis model for tumor invasion} 


\begin{abstract}
We revisit the following  chemotaxis system modeling tumor invasion
\begin{equation*}
\begin{cases}
u_t=\Delta u-\nabla \cdot(u\nabla v),& x\in\Omega, t>0,\\
v_t=\Delta v+wz,& x\in\Omega, t>0,\\
w_t=-wz,& x\in\Omega, t>0,\\
z_t=\Delta z-z+u, & x\in\Omega, t>0,\\
\end{cases}
\end{equation*}
in a smooth  bounded domain $\Omega \subset \R^n(n\geq 1)$ with homogeneous Neumann boundary and initial conditions. This model was recently proposed by  Fujie {\it{et al.}} \cite{FIY14} as a model for tumor invasion with the role of extracellular matrix incorporated, and was analyzed by Fujie {\it{et al.}}  \cite{FIWY16}, showing the uniform boundedness and convergence for $n\leq 3$. In this work, we first show that the $L^\infty$-boundedness of the system can be reduced to the boundedness of $\|u(\cdot,t)\|_{L^{\frac{n}{4}+\epsilon}(\Omega)}$ for some $\epsilon>0$ alone, and then, for $n\geq 4$, if the initial data $\|u_0\|_{L^{\frac{n}{4}}}$,  $\|z_0\|_{L^\frac{n}{2}}$ and  $\|\nabla v_0 \|_{L^n}$  are sufficiently small, we are able to  establish the $L^\infty$-boundedness of the system. Furthermore, we show that boundedness implies exponential convergence with explicit convergence rate, which  resolves the open problem left in \cite{FIWY16}. More precisely, it is shown, if $u_0\geq, \not\equiv 0$, then any bounded solution $(u,v,w,z)$ of the tumor invasion model satisfies the following exponential decay estimate:
$$
\ba{ll}
&\Bigr\|(u(\cdot, t),v(\cdot, t),w(\cdot, t), z(\cdot, t))-(\bar{u}_0, \bar{v}_0+\bar{w}_0, 0, \bar{u}_0)\Bigr\|_{L^\infty(\Omega)}\\[0.3cm]
&\leq C\Bigr(e^{-\frac{1}{2}\min\{\lambda_1, \frac{\bar{u}_0}{3}\}t},e^{-\min\{\lambda_1,\frac{\bar{u}_0}{3}\}t},e^{-\frac{\bar{u}_0}{2}t},e^{-\min\{1, \frac{\lambda_1,}{2} \frac{\bar{u}_0}{6}\}t} \Bigr)
\ea
$$
for all $t>0$  and for some constant $C>0$ independent of time $t$.  Here, for a generic function $f$, $\bar{f}$ means the spatial average  of $f$ over $\Omega$ and $\lambda_1(>0)$ is  the first nonzero eigenvalue of $-\Delta$ in $\Omega$ with homogeneous Neumann boundary condition.

\end{abstract}

\subjclass[2000]{35A01, 35B40, 35B44, 35K57, 35Q92, 92C17}

\keywords{Chemotaxis, boundedness, exponential convergence, convergence rates}

\maketitle

\numberwithin{equation}{section}
\section{Introduction and main results}
In this paper, we shall  revisit the  chemotaxis system modeling tumor invasion:
\begin{equation}\label{TI-model}
\begin{cases}
u_t=\Delta u-\nabla \cdot(u\nabla v),& x\in\Omega, t>0,\\
v_t=\Delta v+wz,& x\in\Omega, t>0,\\
w_t=-wz,& x\in\Omega, t>0,\\
z_t=\Delta z-z+u, & x\in\Omega, t>0,\\
\frac{\partial u}{\partial \nu}=\frac{\partial v}{\partial \nu}=\frac{\partial z}{\partial \nu}=0, & x\in \partial \Omega, t>0,\\
(u(x,0),  v(x,0), w(x,0), z(x,0))=(u_0(x), v_0(x),  w_0(x), z_0(x)), & x\in \Omega,
\end{cases}
\end{equation}
where  $\Omega \subset  \R^n(n\geq 1)$ a bounded domain with smooth boundary $\partial \Omega$,  $\frac{\partial }{\partial \nu}$ represents the differentiation  with respect to the outward normal vector $\nu$ of $\partial\Omega$ and the prescribed initial data satisfy
\begin{equation}\label{Reg-initial}
(u_0,v_0,w_0,z_0)\in C^0(\bar{\Omega})\times W^{1,\infty}(\Omega)\times C^1(\bar{\Omega})\times  C^0(\bar{\Omega})  \text{ with } u_0,v_0,w_0,z_0\geq 0.
\end{equation}
The system  \eqref{TI-model} was recently proposed  by Fujie et al. \cite{FIY14}  as a modified tumor invasion model with chemotaxis effect of  Chaplain and Anderson   type \cite{CA12}. According to  the cancer phenomena point of view, the unknown functions $u,v,w$ and $z$  denote the molar concentration of tumor cells, {\it{active extracellular  matrix}} (ECM$^*$),   extracellular matrix (ECM)  and matrix degrading enzymes (MDE), respectively.   A particular  core of the model is  to account for a chemoattractant induced by an ECM$^*$, which is produced by a biological reaction between ECM and MDE.  We refer to \cite{FIY14} for more explanations and biological background.

Compared with the {\it{haptataxis}} (in which attractants are non-diffusible) type model usually used to describe cancer invasion as pointed out in \cite{FIWY16}, the cross-diffusion is of {\it{chemotaxis}} type in that it is oriented  toward the higher concentration of the diffusible ECM$^*$, the latter being produced by the static ECM together with the chemical MDE. In contrast to the direct Keller-Segel  prototypical model of chemotaxis process \cite{Ke},
\begin{equation}\label{KS-model}
\begin{cases}
u_t=\Delta u-\nabla \cdot(u\nabla z),& x\in\Omega, t>0,\\
z_t=\Delta z-z+u, & x\in\Omega, t>0,\\
\frac{\partial u}{\partial \nu}=\frac{\partial z}{\partial \nu}=0, & x\in \partial \Omega, t>0,\\
(u(x,0), z(x,0))=(u_0(x),  z_0(x)), & x\in \Omega,
\end{cases}
\end{equation}
 which has  the possibility of  blow-up in a finite/infinite time  depending strongly on the space dimension (No blow-up in 1-D \cite{OY01, HP04}, critical mass blow-up in 2-D \cite{HV97,Jager, Na95, NSY97, SS01, MWpre} and, generic blow-up in $\geq 3$-D \cite{Win10-JDE, Win13}), the indirect chemotaxis mechanism in \eqref{TI-model} has been shown to have a role in enhancing the  regularity  and boundedness properties of solutions \cite{FIWY16}.  Indeed, therein, they showed the  boundedness and uniform convergence of  \eqref{TI-model}  to a certain spatially constant equilibrium for $n\leq 3$. More precisely, they proved:
\begin{itemize}
\item[(R1)]  \textbf{(Boundedness in $\leq 3$-D) } Let  $n\leq 3$ and  \eqref{Reg-initial} hold. Then there exists a uniquely determined quadruple $(u,v,w,z)$ of nonnegative functions defined on $\bar{\Omega}\times [0,\infty)$ which solves \eqref{TI-model} classically and  is bounded in the sense there exists  a constant $C>0$ such that  for all $t>0$
\be\label{bdd-sol.}
\|u(\cdot, t)\|_{L^\infty(\Omega)}+\|v(\cdot, t)\|_{W^{1,\infty}(\Omega)}+\|w(\cdot, t)\|_{L^\infty(\Omega)}+\|z(\cdot, t)\|_{L^\infty(\Omega)}<C.
\ee
\item[(R2)]  \textbf{(Uniform Convergence)} Let  $n\leq 3$ and suppose that \eqref{Reg-initial} hold and $u_0\geq, \not\equiv 0$.  Then the solution of \eqref{TI-model} fulfills
\be\label{convergence-u-z}
\Bigr\|(u(\cdot, t),v(\cdot, t),w(\cdot, t), z(\cdot, t))-(\bar{u}_0, \bar{v}_0+\bar{w}_0, 0, \bar{u}_0)\Bigr\|_{L^\infty(\Omega)}\rightarrow 0
\ee
as $t\rightarrow \infty$, where
\be\label{average-u0-z0}
\bar{u}_0=\frac{1}{|\Omega|}\int_\Omega u_0, \quad \bar{v}_0=\frac{1}{|\Omega|}\int_\Omega v_0, \quad \bar{w}_0=\frac{1}{|\Omega|}\int_\Omega w_0.
\ee
\end{itemize}

The conclusions (R1) and (R2) tell us that, when $n\leq 3$, the  chemotactic  cross-diffusion is substantially overbalanced by random diffusion, and that hence  the overall behavior of the model, with respect to both global solvability and asymptotic behavior, is essentially the same as that  of the correspondingly modified system  without this chemotaxis term. The convergence to the constant equilibrium is uniform as stated in \eqref{convergence-u-z}. While, it is important and curious to ask further questions like: how does the solution converge to that equilibrium, algebraically or exponentially, and, what will happen if $n\geq 4$?  The former indeed is the interesting  open question left in \cite[Remark 4.9]{FIWY16} as to their respective rates of convergence in \eqref{convergence-u-z} except the convergence rate of $w$.  Our primary motivation of this paper is to answer this open question:  by carefully utilizing  the Neumann heat semigroup theory, we show that any bounded solution of \eqref{TI-model} converges not only uniformly but also exponentially to its equilibrium $(\bar{u}_0, \bar{v}_0+\bar{w}_0, 0, \bar{u}_0)$. Moreover, by carefully collecting the appearing constants, we calculate  out their explicit rates of convergence in terms of initial datum $u_0$ and the first nonzero Neumann eigenvalue $\lambda_1$ of $-\Delta$ in $\Omega$.

As for the global boundedness, the result  (R1) above  says that  no  blow-up phenomenon could occur when $n\leq 3$. It is quite natural to ask what will happen when $n\geq 4$, will indirect chemotactic  cross-diffusion still enforce boundedness unconditionally?  To explore this problem, we first observe that the boundedness of the model in the sense of \eqref{bdd-sol.} can be indeed reduced to the spatial $L^{\frac{n}{4}+\epsilon}$-boundedness of its $u$ component alone, and then, under certain smallness conditions on the initial data, we demonstrate that infinite chemotactic aggregation can also be  fully suppressed.  This closes the mathematical completeness of the boundeness for the tumor invasion model \eqref{TI-model}.  Our main results read as follows.
\begin{theorem}[Boundedness and exponential convergence for  \eqref{TI-model}]\label{mthm}
\
\
\
\begin{itemize}
\item[(R3)] (\textbf{Boundedness  criterion  in $n$-D}) Let  $n\geq 1$ and let \eqref{Reg-initial} hold. Then the boundedness of $\|u(\cdot, t)\|_{L^{\frac{n}{4}+\epsilon}(\Omega)}$ for some $\epsilon>0$ alone  is sufficient to guarantee  the boundedness of \eqref{TI-model} in the sense of \eqref{bdd-sol.}.
\item[(R4)] (\textbf{Boundedness  in $>3$-D}) Let $n>3$ and \eqref{Reg-initial} hold.  Then,  one can find $\epsilon_0>0$ such that for $\|u_0\|_{L^{\frac{n}{4}}(\Omega)}<\epsilon_0$, $\|z_0\|_{L^\frac{n}{2}(\Omega)}<\epsilon_0$ and $\|\nabla v_0\|_{L^n(\Omega)}<\epsilon_0$, the tumor invasion model \eqref{TI-model} possesses a unique global-in-time classical solution, which is also bounded according to \eqref{bdd-sol.}.
\item[(R5)] (\textbf{Exponential Convergence}) Let $n\geq 1$ and $u_0\geq,\not\equiv 0$. Then all the bounded solutions of \eqref{TI-model} converge not only uniformly but also exponentially to its equilibrium $(\bar{u}_0, \bar{v}_0+\bar{w}_0, 0, \bar{u}_0)$. More precisely,  for any bounded solution,  there exist $t_0>0$ and $m_i>0$ such that
\be\label{exp-convergence-u-z}
\begin{cases}
\|u(\cdot, t)-\bar{u}_0\|_{L^\infty(\Omega)}\leq m_1e^{-\frac{1}{2}\min\{\lambda_1, \frac{\bar{u}_0}{3}\}t}, & \forall t\geq 0, \\[0.25cm]
\|v(\cdot, t)-(\bar{v}_0+\bar{w}_0)\|_{L^\infty(\Omega)}\leq m_2e^{-\min\{\lambda_1,\frac{\bar{u}_0}{3}\}t)}, & \forall t\geq 0, \\[0.25cm]
\|w(\cdot, t)\|_{L^\infty(\Omega)}\leq m_3e^{-\frac{\bar{u}_0}{2}t}, & \forall t\geq 0,\\[0.25cm]
\|z(\cdot, t)-\bar{u}_0\|_{L^\infty(\Omega)}\leq m_4e^{-\min\{1, \frac{\lambda_1,}{2} \frac{\bar{u}_0}{6}\}t}, & \forall t\geq 0.
\end{cases}
\ee
\end{itemize}
Here, $m_i (i=1,2,3,4)$ are suitably large constants depending on $\lambda_1$, the initial data $u_0, v_0, w_0$ and Sobolev embedding constants, cf. Section 4.
\end{theorem}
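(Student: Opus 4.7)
The plan is to handle the three claims by a common thread of Neumann heat-semigroup smoothing, supplemented by a Lyapunov-type energy analysis to extract the explicit rates in (R5). For (R3), I would bootstrap the mild formulations. The $w$-equation yields $0\le w\le\|w_0\|_{L^\infty}$ at once. Given $\|u(\cdot,t)\|_{L^{n/4+\epsilon}}$ bounded, the standard $L^p$-$L^q$ estimate for $e^{t(\Delta-1)}$ applied to $z(t)=e^{t(\Delta-1)}z_0+\int_0^t e^{(t-s)(\Delta-1)}u(s)\,ds$ upgrades $z$ to $L^\infty_t L^q_x$ with some $q>n/2$, and then $\nabla v(t)=\nabla e^{t\Delta}v_0+\int_0^t\nabla e^{(t-s)\Delta}(wz)(s)\,ds$ lands in $L^\infty_t L^r_x$ with some $r>n$. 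Feeding this into the Duhamel formula for $u$ and using the gradient-semigroup bound $\|\nabla e^{t\Delta}f\|_{L^p}\le Ct^{-\alpha}\|f\|_{L^q}$ pushes $u$ along a finite chain of $L^p$ spaces up to $L^\infty$, after which the remaining components follow from another semigroup step.

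For (R4), I would introduce a continuation quantity at the scale-critical norms, for instance $M(T):=\sup_{t\le T}\bigl(\|u(t)\|_{L^{n/4}}+\|z(t)\|_{L^{n/2}}+\|\nabla v(t)\|_{L^n}\bigr)$, and derive from the mild formulations a self-improving estimate of the form $M(T)\le C_0\epsilon_0+C_1M(T)^2$. The exponents are chosen precisely so that the time integrals arising in the Duhamel pieces converge uniformly in $T$, making $C_1$ independent of $T$. A standard absorption/continuity argument then forces $M(T)\le 2C_0\epsilon_0$ on the maximal existence interval whenever $\epsilon_0$ is small enough, at which point (R3) delivers full $L^\infty$ boundedness.

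The heart of the theorem is (R5). Three mean-value identities are immediate: $\bar u(t)\equiv\bar u_0$ by mass conservation, $(\bar v+\bar w)'\equiv 0$ gives $\bar v(t)+\bar w(t)\equiv\bar v_0+\bar w_0$, and $\bar z'+\bar z=\bar u_0$ yields $\bar z(t)=\bar u_0+(\bar z_0-\bar u_0)e^{-t}$. Testing the $u$-equation against $u-\bar u_0$ and invoking Poincar\'e together with the boundedness of $\|u\|_{L^\infty}$ and $\|\nabla v\|_{L^\infty}$ from (R3) produces $\frac{d}{dt}\|u-\bar u_0\|_{L^2}^2+2\lambda_1\|u-\bar u_0\|_{L^2}^2\le C\|\nabla v\|_{L^2}^2$; analogous dissipation inequalities for $z-\bar z$ and $\nabla v$ close the loop through the $wz$ source, and a suitable linear combination delivers exponential $L^2$ decay of $u-\bar u_0$, $z-\bar u_0$ and $\nabla v$ at rate close to $\min\{\lambda_1,1\}$.

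The delicate final step is the decay of $w$ and the resulting explicit appearance of $\bar u_0$ in the other rates. Upgrading $\|z-\bar u_0\|_{L^2}\to 0$ to $\|z-\bar u_0\|_{L^\infty}\to 0$ via semigroup smoothing (with essentially no loss in exponent) produces some $t_0>0$ with $z(x,t)\ge\bar u_0/2$ for all $t\ge t_0$; the pointwise ODE $w_t=-wz\le-(\bar u_0/2)w$ then yields $\|w(\cdot,t)\|_{L^\infty}\le m_3 e^{-\bar u_0 t/2}$. Reinserting this decay of $w$ into the $v$-equation gives the $\min\{\lambda_1,\bar u_0/3\}$ rate for $v-(\bar v_0+\bar w_0)$, and refining the dissipation estimate for $u$ with the now-decaying $\nabla v$ improves the $u$-rate to $\tfrac12\min\{\lambda_1,\bar u_0/3\}$; the $z$-rate $\min\{1,\lambda_1/2,\bar u_0/6\}$ then falls out of the linear $z$-equation. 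The main obstacle will be the bookkeeping of constants: three interlocking exponents ($\lambda_1$ from Poincar\'e, $1$ from the linear $z$-dissipation, and $\bar u_0$ from the $w$-decay) have to be propagated through the $L^2$-to-$L^\infty$ interpolation, which typically costs a factor of $\tfrac12$ in the exponent and is precisely why the $u$-rate comes out halved in \eqref{exp-convergence-u-z}.
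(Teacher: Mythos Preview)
Your outline for (R3) matches the paper's bootstrap via semigroup estimates and is fine.

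For (R4) there is a genuine gap. At the exactly critical triple $(L^{n/4},L^{n/2},L^n)$ that you propose for $M(T)$, every Duhamel integral lands on a borderline non-integrable singularity: going $u\in L^{n/4}\to z\in L^{n/2}$ costs $(t-s)^{-1}$, going $wz\in L^{n/2}\to \nabla v\in L^{n}$ costs $(t-s)^{-1}$, and going $u\nabla v\in L^{n/5}\to u\in L^{n/4}$ via $e^{t\Delta}\nabla\cdot$ again costs $(t-s)^{-1}$. So the inequality $M(T)\le C_0\epsilon_0+C_1M(T)^2$ cannot be closed with $C_1$ independent of $T$ as you claim. The paper repairs this by working in a slightly supercritical window $\theta\in(\tfrac n4,\tfrac n3)$ with time-weighted norms $(1+t^{-2+n/(2\theta)})e^{-\kappa t}$, and---crucially---by first securing an a priori lower bound $z\ge\delta_1>0$ for $t>1$ (via a comparison argument independent of boundedness), which yields $\|w(\cdot,t)\|_{L^\infty}\le\|w_0\|_{L^\infty}e^{-\delta_1 t}$. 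This decay of $w$ is what makes the $\nabla v$ estimate close with a small constant despite no smallness on $w_0$; your sketch omits both ingredients.

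For (R5) your route is genuinely different from the paper's and would work, but your account of where the factor $\tfrac12$ in the $u$-rate comes from is off. The paper never passes through $L^2$ energy estimates: it first invokes the known uniform convergence to fix $t_0$ with $z\ge\bar u_0/2$, obtains $\|w\|_{L^\infty}\le\|w_0\|_{L^\infty}e^{-\bar u_0(t-t_0)/2}$, and then runs everything through variation-of-constants in $L^\infty$ (and $L^p$ for $\nabla v$). The $\tfrac12$ appears in the $\nabla v$ step, not from any $L^2\to L^\infty$ interpolation: in the convolution $\int_{t_0}^t(1+(t-s)^{-1/2})e^{-\lambda_1(t-s)}e^{-\bar u_0(s-t_0)/2}\,ds$ one only extracts $e^{-\frac12\min\{\lambda_1,\bar u_0/3\}(t-t_0)}$ after bounding the polynomial prefactor crudely, and this halved rate then propagates unchanged to $u$ and $z$. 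Your energy approach, if carried out carefully (absorbing $\int u\nabla v\cdot\nabla u$ with an $\epsilon$-Young inequality), would actually yield the \emph{unhalved} rate $\min\{\lambda_1-\delta,\bar u_0/2\}$ at the $L^2$ level, so attributing the loss to interpolation misidentifies the mechanism.
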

Thanks to the free mass conversation that $\|u\|_{L^1(\Omega)}=\|u_0\|_{L^1(\Omega)}$,  the boundedness criterion (R3) simply recovers (R1) obtained in \cite{FIWY16}. This also explains why we always  have unconditional  boundedness  when  $n\leq 3$.  For the direct Keller-Segel model \eqref{KS-model}, it has been known that the $L^{\frac{n}{2}+\epsilon}$-boundedness of $u$ implies its $L^\infty$-boundedness and that certain smallness on the initial data with respect to $L^{\frac{n}{2}}$-norm of $u$ and $L^n$-norm of $\nabla z$ ensures boundedness \cite{Cao15}. Hence, the criterion (R3) and boundedness  (R4) present a mathematical quantification for  the tumor model \eqref{TI-model} as an indirect chemotaxis model compared to the direct chemotaxis model \eqref{KS-model}, in other words, how indirect  the tumor model \eqref{TI-model} is, as a chemotaxis model compared to the direct chemotaxis model \eqref{KS-model}. Besides, we wish to point out that boundedness  (R4)  imposes no restriction on $w_0$.  The exponential convergence (R5) surely sharpens the uniform convergence (R2),  and therefore resolves the open problem left in \cite[Remark 4.9]{FIWY16}. Moreover, for  the tumor model \eqref{TI-model}, our result shows that boundedness implies not only uniform convergence as shown in \cite{FIWY16} but also exponential convergence.

As the direct chemotaxis model \eqref{KS-model} possesses blow-ups in higher dimensions \cite{Win10-JDE, Win13}, it would be interesting to investigate whether or not the indirect chemotactic cross-diffusion could drive blow-up phenomenon  without smallness conditions on the initial data in higher dimensions ($n\geq 4$)? While, we will leave this for future explorations.

\section{Preliminaries}
In our subsequent  discussions, we need some  well-known smoothing $L^p-L^q$ type estimates for the Neumann heat semigroup in $\Omega$. We list them here for convenience, one can find them in \cite[Lemma 1.3]{Win10-JDE} and \cite[Lemma 2.1]{Cao15}.
\begin{lemma}\label{heat group}  Let $(e^{t\Delta})_{t>0}$ be the Neumann heat semigroup in $\Omega$ and let $\lambda_1>0$ be the first nonzero eigenvalue of $-\Delta$ with homgeneous Neumann boundary condition. Then there exist some constants $k_i(i=1,2,3,4)$ depending only on $\Omega$ such that
\begin{itemize}
\item[(i)] If $1\leq q\leq p\leq \infty$, then
\begin{equation}\label{Lp-3}
\|e^{t\Delta}f\|_{L^p(\Omega)}\leq k_1\left(1+t^{-\frac{n}{2}(\frac{1}{q}-\frac{1}{p})}\right)e^{-\lambda_1 t}\|f\|_{L^q(\Omega)}  \text{  for all  } t>0
\end{equation}
holds for all $f\in L^q(\Omega)$ satisfying $\int_\Omega f=0$.
\item[(ii)]  If  $1\leq q\leq p\leq\infty$, then
\begin{equation}\label{Lp-2}
\|\nabla e^{t\Delta}f\|_{L^p(\Omega)}\leq k_2\left(1+t^{-\frac{1}{2}-\frac{n}{2}(\frac{1}{q}-\frac{1}{p})}\right)e^{-\lambda_1 t}\|f\|_{L^q(\Omega)}  \text{  for all  } t>0
\end{equation}
is valid for all $f\in L^q(\Omega)$.
\item[(iii)] If $2\leq q\leq p<\infty$, then
\begin{equation}\label{Lp-1}
\|\nabla e^{t\Delta}f\|_{L^p(\Omega)}\leq k_3\Bigr(1+t^{-\frac{n}{2}(\frac{1}{q}-\frac{1}{p}}\Bigr)e^{-\lambda_1t}\|\nabla f\|_{L^q(\Omega)} \text{  for all  } t>0
\end{equation}
is true for all $f\in W^{1,p}(\Omega)$.

\item[(iv)] If $1< q\leq p\leq \infty$, then
\begin{equation}\label{Lp-4}
\|e^{t\Delta}\nabla\cdot f\|_{L^p(\Omega)}\leq k_4\left(1+t^{-\frac{1}{2}-\frac{n}{2}(\frac{1}{q}-\frac{1}{p})}\right)e^{-\lambda_1 t}\|f\|_{L^q(\Omega)}  \text{  for all  } t>0
\end{equation}
is valid for all $f\in (W^{1,p}(\Omega))^n$.
\end{itemize}
\end{lemma}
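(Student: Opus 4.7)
The plan is to derive all four estimates from two classical ingredients for the Neumann heat semigroup on a smooth bounded domain: \emph{short-time smoothing} coming from pointwise Gaussian bounds on the Neumann heat kernel $K(x,y,t)$ (and on $\nabla_x K$) combined with Young's convolution inequality, giving the polynomial factors $t^{-\frac{n}{2}(\frac{1}{q}-\frac{1}{p})}$ and $t^{-\frac{1}{2}-\frac{n}{2}(\frac{1}{q}-\frac{1}{p})}$ as $t\downarrow 0$; and \emph{long-time spectral decay} obtained from the spectral decomposition of $-\Delta$ in the orthonormal basis $\{\phi_k\}_{k\geq 0}$ of Neumann eigenfunctions with eigenvalues $0=\mu_0<\mu_1=\lambda_1\leq\mu_2\leq\cdots$. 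The mean-zero hypothesis in (i) kills the $\phi_0\equiv|\Omega|^{-1/2}$ mode so that $\|e^{t\Delta}f\|_{L^2}\leq e^{-\lambda_1 t}\|f\|_{L^2}$; this Hilbert-space decay is promoted to $L^p$ via the semigroup property together with $L^2$--$L^p$ smoothing.

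For (i) and (ii) I would split the time axis at $t=1$. On $(0,1]$ the short-time smoothing applies directly and $e^{-\lambda_1 t}\leq 1$ is absorbed into the constant, while on $[1,\infty)$ I factor $e^{t\Delta}=e^{\Delta}\circ e^{(t-1)\Delta}$ (and similarly $\nabla e^{t\Delta}=\nabla e^{\Delta}\circ e^{(t-1)\Delta}$), applying short-time smoothing to the first factor and spectral decay to the second. In (ii) there is no mean-zero hypothesis, but the gradient annihilates constants, so I replace $f$ by $f-\bar f$, which has zero mean and satisfies $\|f-\bar f\|_{L^q}\leq C\|f\|_{L^q}$. The multiplicative envelope $1+t^{-\alpha}$ in the statement is just the compact way of recording a bound that equals $t^{-\alpha}$ on $(0,1]$ and a constant on $[1,\infty)$, multiplied by $e^{-\lambda_1 t}$. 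Statement (iv) then follows from (ii) by duality: integrating by parts against a test function $\varphi$ using $\partial_\nu\varphi=0$ yields $\int_\Omega(e^{t\Delta}\nabla\cdot f)\varphi=-\int_\Omega f\cdot\nabla e^{t\Delta}\varphi$, and H\"older combined with (ii) for the dual exponents produces (iv).

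The delicate item is (iii), whose exponent $-\frac{n}{2}(\frac{1}{q}-\frac{1}{p})$ is half a power better than the naive combination of the short-time and long-time bounds would produce; this improvement is only available because $\nabla f$, rather than $f$ itself, controls the right-hand side. I would write $\nabla e^{t\Delta}f=\nabla e^{t\Delta}(f-\bar f)$, then for $t\geq 1$ split $e^{t\Delta}=e^{(t-1)\Delta}\circ e^{\Delta}$ and combine spectral decay on the second factor with $W^{1,q}\to W^{1,p}$ boundedness of the Neumann semigroup on the first, while for $0<t\leq 1$ an energy-type argument for $u:=e^{t\Delta}(f-\bar f)$---testing $u_t=\Delta u$ against $-\nabla\cdot(|\nabla u|^{q-2}\nabla u)$ and integrating by parts---yields the improved short-time exponent. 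The main obstacle, and the reason the restriction $q\geq 2$ appears, is controlling the boundary contribution produced by the fact that the Neumann condition $\partial_\nu u=0$ does not translate into a clean boundary condition for $\nabla u$: absorbing this term into the interior dissipation of $\int_\Omega|\nabla u|^{q-2}|D^2 u|^2$ requires $q\geq 2$, which is exactly the stated hypothesis. The fully worked-out arguments may be found in \cite[Lemma 1.3]{Win10-JDE} and \cite[Lemma 2.1]{Cao15}.
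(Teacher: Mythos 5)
The paper offers no proof of this lemma at all: it is imported verbatim as a known result, with the reader sent to \cite[Lemma 1.3]{Win10-JDE} and \cite[Lemma 2.1]{Cao15}, so there is no in-paper argument to measure yours against. Your outline is the standard route underlying those references — Gaussian bounds on the Neumann kernel and its gradient for the short-time factors $t^{-\frac n2(\frac1q-\frac1p)}$ and $t^{-\frac12-\frac n2(\frac1q-\frac1p)}$, the spectral gap after removing the constant mode for the decay $e^{-\lambda_1 t}$, promotion of the $L^2$ decay by the semigroup property, and duality to pass from (ii) to (iv) — and as a skeleton it is correct.

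Two steps are stated more smoothly than they actually go. First, in the duality step for (iv) the identity $\int_\Omega(e^{t\Delta}\nabla\cdot f)\varphi=-\int_\Omega f\cdot\nabla e^{t\Delta}\varphi$ comes from self-adjointness of $e^{t\Delta}$ followed by integration by parts in the $f$ variable, and the boundary term $\int_{\partial\Omega}(f\cdot\nu)\,e^{t\Delta}\varphi$ is not killed by $\partial_\nu\varphi=0$; it vanishes for $f\in(C_0^\infty(\Omega))^n$, and (iv) is then the estimate for the densely defined extension of $e^{t\Delta}\nabla\cdot$ — this is exactly how the cited lemmas phrase it, so your justification should be density, not the Neumann condition on $\varphi$. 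Second, for (iii) the testing of $u_t=\Delta u$ against $-\nabla\cdot(|\nabla u|^{q-2}\nabla u)$ (with the boundary term $\int_{\partial\Omega}|\nabla u|^{q-2}\partial_\nu|\nabla u|^2$ controlled via the curvature bound and trace interpolation) yields the $L^q$-to-$L^q$ gradient bound $\|\nabla e^{t\Delta}f\|_{L^q}\le C\|\nabla f\|_{L^q}$ on bounded time intervals, but it does not by itself give the $L^p$ norm on the left with the improved exponent; you still need an interpolation step, e.g. Gagliardo--Nirenberg for $\nabla u$ combined with $\|\Delta e^{t\Delta}f\|_{L^q}\le Ct^{-\frac12}\|\nabla f\|_{L^q}$ (which follows from (iv) with $p=q$ applied to $\nabla e^{\frac t2\Delta}f$), to reach $t^{-\frac n2(\frac1q-\frac1p)}\|\nabla f\|_{L^q}$. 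Also, the restriction $q\ge2$ is needed already to make the test function $|\nabla u|^{q-2}\nabla u$ admissible and the interior terms correctly signed, not only to absorb the boundary contribution. With these repairs your plan does reproduce the lemma; in any event the paper itself proves nothing here and simply cites the literature you also point to.
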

For convenience, we also write down the local well-posedness of \eqref{TI-model} and its  extendibility criterion from \cite[Theorem 3.1]{FIY14} or \cite[Lemma 2.1]{FIWY16}.
\begin{lemma}\label{local-in-time}Let  $n\geq 1$,   $\Omega\subset \mathbb{R}^n$ be a bounded domain with a smooth boundary and let the initial data $(u_0,v_0, w_0,z_0)$ satisfy \eqref{Reg-initial}.  Then there exist $T_m\in(0, \infty]$ and a unique,  nonnegative,  classical  solution $(u,v, w, z)$ of   (\ref{TI-model}) on $\Omega\times [0, T_m)$ such that
$$\ba{ll}
u\in C(\bar{\Omega}\times [0, T_m))\cap C^{2,1}(\bar{\Omega}\times (0, T_m)), \\[0.2cm]
v\in C(\bar{\Omega}\times [0, T_m))\cap C^{2,1}(\bar{\Omega}\times (0, T_m))\cap L_{\text{loc}}^\infty([0, T_m); W^{1,\infty}(\Omega))\\[0.2cm]
w\in C(\bar{\Omega}\times [0, T_m))\cap C^{0,1}(\bar{\Omega}\times (0, T_m)), \\[0.2cm]
z\in C(\bar{\Omega}\times [0, T_m))\cap C^{2,1}(\bar{\Omega}\times (0, T_m)).
\ea $$
Furthermore,   if $T_m<\infty$, then
\be\label{T_m-cri} \|u(\cdot, t)\|_{L^\infty(\Omega)}+\|v(\cdot, t)\|_{W^{1,\infty}(\Omega)}+\|z(\cdot, t)\|_{L^\infty(\Omega)}\rightarrow \infty \quad \quad \mbox{ as }  t\rightarrow T_m^{-}.
\ee
\end{lemma}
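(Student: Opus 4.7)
The plan is a standard contraction-mapping construction of a mild solution, a parabolic bootstrap to classical regularity and nonnegativity, and an abstract maximality/contradiction argument for the extensibility alternative.

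\medskip

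\emph{Local existence via fixed point.} The key structural observation is that the third equation is a pointwise-in-$x$ ODE with explicit solution
\begin{equation*}
w(x,t) = w_0(x)\exp\!\Bigl(-\int_0^t z(x,s)\,ds\Bigr),
\end{equation*}
which preserves nonnegativity and yields $\|w(t)\|_{L^\infty}\leq\|w_0\|_{L^\infty}$ whenever $z\geq 0$. Substituting this formula reduces the problem to the triple $(u,v,z)$, for which I would write the Duhamel representations based on the Neumann semigroups $(e^{t\Delta})_{t\geq 0}$ and $(e^{t(\Delta-1)})_{t\geq 0}$, and define a nonlinear map $\Phi$ on a closed ball of radius $R$ about $(u_0,v_0,z_0)$ in
\begin{equation*}
X_T := C([0,T];L^\infty(\Omega))\times C([0,T];W^{1,\infty}(\Omega))\times C([0,T];L^\infty(\Omega)).
\end{equation*}
Using the semigroup bounds of Lemma \ref{heat group}—in particular \eqref{Lp-4} with $p=\infty$ and $q>n$ to absorb the chemotactic flux $\nabla\cdot(u\nabla v)$, and \eqref{Lp-2}–\eqref{Lp-3} for the linear source terms $wz$ and $u$—one checks that $\Phi$ maps the ball into itself and is a strict contraction provided $T$ is chosen small depending on $R$ and $\|w_0\|_{L^\infty}$. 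Banach's theorem then produces a unique mild fixed point.

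\medskip

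\emph{Classical regularity and positivity.} With a mild solution in hand, the linear equations for $u$, $v$, $z$ with the remaining components frozen have $L^\infty$ coefficients, so the parabolic Schauder theory of Ladyzhenskaya–Solonnikov–Ural'tseva (or equivalently the analytic semigroup/interpolation approach of Amann) bootstraps the regularity to $C^{2,1}(\bar\Omega\times(0,T_m))$; the $L^\infty_{\text{loc}}(W^{1,\infty})$ bound on $v$ follows from \eqref{Lp-2} applied to the mild formula, and the $C^{0,1}$ regularity of $w$ is immediate from its ODE representation together with the regularity of $z$ and $w_0$. Nonnegativity of $u$, $v$, $z$ is preserved by the parabolic maximum principle, noting that the chemotactic flux $u\nabla v$ vanishes on $\{u=0\}$; nonnegativity of $w$ is automatic from the exponential formula.

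\medskip

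\emph{Extensibility criterion.} Define $T_m\in(0,\infty]$ as the supremum of times on which the construction above yields a classical solution. Suppose for contradiction that $T_m<\infty$ but
\begin{equation*}
M := \limsup_{t\to T_m^-}\bigl(\|u(t)\|_{L^\infty(\Omega)}+\|v(t)\|_{W^{1,\infty}(\Omega)}+\|z(t)\|_{L^\infty(\Omega)}\bigr)<\infty.
\end{equation*}
Since the short-time existence interval $\tau$ produced by the fixed-point step depends only on $M$ and on the permanent bound $\|w_0\|_{L^\infty}$, choosing $t_0<T_m$ close enough to $T_m$ so that $\tau(M,\|w_0\|_{L^\infty})>T_m-t_0$ and restarting from $(u(t_0),v(t_0),w(t_0),z(t_0))$ extends the solution past $T_m$, a contradiction.

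\medskip

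The main technical point—and the only place where any subtlety enters—is arranging the fixed point so that its existence time depends only on the three norms that appear in \eqref{T_m-cri}, rather than on higher-order quantities such as Hölder seminorms of $\nabla v$. This is exactly what the endpoint estimate \eqref{Lp-4} with $p=\infty$ and $q>n$ provides, giving an integrable singularity $(t-s)^{-1/2-n/(2q)}$ that closes in $L^\infty$ against a bounded $u\nabla v$.
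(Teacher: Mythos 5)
The paper does not actually prove this lemma: it is quoted verbatim from \cite{FIY14} (Theorem 3.1) and \cite{FIWY16} (Lemma 2.1), so there is no in-paper argument to compare against, and your proposal in effect reconstructs the standard proof used in those references. Your architecture is the right one: eliminate $w$ via its explicit exponential formula, run a Banach fixed-point argument on the Duhamel formulation for $(u,v,z)$ in an $L^\infty\times W^{1,\infty}\times L^\infty$-type space using the $L^p$--$L^q$ semigroup estimates, bootstrap to classical regularity and nonnegativity, and deduce the extensibility alternative from the fact that the local existence time depends only on the norms appearing in \eqref{T_m-cri} together with $\|w_0\|_{L^\infty}$; your closing remark correctly isolates the key point (the integrable singularity coming from \eqref{Lp-4} with $q>n$). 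A few technical repairs are needed but none is a genuine gap: (i) the Neumann heat semigroup is not strongly continuous on $W^{1,\infty}(\Omega)$, so the middle factor of your $X_T$ should be $L^\infty(0,T;W^{1,\infty}(\Omega))$ rather than $C([0,T];W^{1,\infty}(\Omega))$, and the ball should not be centered at $(u_0,v_0,z_0)$ in a way that forces $\nabla e^{t\Delta}v_0\to\nabla v_0$ in $L^\infty$ as $t\to 0$ --- this is consistent with the $L^\infty_{\mathrm{loc}}([0,T_m);W^{1,\infty})$ regularity actually asserted for $v$; (ii) in the fixed-point ball a candidate $z$ need not be nonnegative, so at that stage one only has $\|w(t)\|_{L^\infty}\le\|w_0\|_{L^\infty}e^{TR}$, which suffices for the contraction, while the clean bound $\|w\|_{L^\infty}\le\|w_0\|_{L^\infty}$ and nonnegativity are recovered a posteriori for the actual solution and are what the restart argument really uses; (iii) for the restart at $t_0$ you should also check that the data at time $t_0$ again satisfy \eqref{Reg-initial}, in particular $w(\cdot,t_0)\in C^1(\bar\Omega)$, which follows from the formula for $w$ because $\|\nabla z(\cdot,s)\|_{L^\infty}\lesssim 1+s^{-1/2}$ is integrable near $s=0$. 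With these adjustments your argument is complete and is essentially the proof given in the cited sources.
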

The regularity  \eqref{Reg-initial} will be assumed in force henceforth. The following properties are simple observations from the equations in \eqref{TI-model} (see \cite[Lemmas 2.2, 2.3 and 2.4] {FIWY16}).
\begin{lemma}\label{basic prop}The solution $(u,v,w,z)$ of \eqref{TI-model} enjoys the following properties.
\begin{itemize}
\item[(i)] The $u$-component has the mass conservation:
$$
\int_\Omega u(x,t)dx=\int_\Omega u_0(x)dx, \quad \forall t\in(0, T_m).
$$
\item[(ii)]The $v$- and $w$-components have  the mass conservation that
$$
\int_\Omega v(x,t)dx+\int_\Omega w(x,t)dx=\int_\Omega v_0(x)dx+\int_\Omega w_0(x)dx, \quad \forall t\in(0, T_m).
$$
\item[(iii)] The $w$-component is bounded in the way that
$$
\|w(\cdot, t)\|_{L^\infty(\Omega)}\leq \|w_0\|_{L^\infty(\Omega)}, \quad \forall t\in(0, T_m).
$$
\end{itemize}
\end{lemma}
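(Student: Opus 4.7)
All three parts are short consequences of elementary manipulations of the PDE system, and I would address them in the order stated. For part (i), the plan is to integrate the first equation in \eqref{TI-model} over $\Omega$ and interchange the time derivative with the spatial integral using the classical regularity afforded by Lemma \ref{local-in-time}. The divergence theorem together with the homogeneous Neumann condition $\frac{\partial u}{\partial \nu}=0$ makes $\int_\Omega \Delta u\,dx$ vanish, and combined with $\frac{\partial v}{\partial \nu}=0$ it also forces $\int_\Omega \nabla\cdot(u\nabla v)\,dx$ to vanish. Hence $\frac{d}{dt}\int_\Omega u\,dx \equiv 0$ and the claim follows by integrating in time from $0$ to $t$.

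For part (ii), I would exploit the exchange structure of the system: summing the $v$- and $w$-equations in \eqref{TI-model} eliminates the $\pm wz$ reaction term and gives the clean identity $(v+w)_t = \Delta v$. Integrating this identity over $\Omega$ and using the Neumann condition on $v$ once more kills the Laplacian term, producing $\frac{d}{dt}\int_\Omega(v+w)\,dx = 0$, which is the asserted conservation law after integration in time. For part (iii), I would view the $w$-equation as a linear ODE in $t$ at each fixed spatial point $x$, namely $w_t(x,t) = -z(x,t)\,w(x,t)$. Because $z\ge 0$ (nonnegativity of the solution components is part of the conclusion of Lemma \ref{local-in-time}), integration in time yields the explicit representation
\begin{equation*}
w(x,t) = w_0(x)\,\exp\!\Bigl(-\int_0^t z(x,\tau)\,d\tau\Bigr) \le w_0(x),
\end{equation*}
and taking the supremum over $x\in\Omega$ delivers the stated $L^\infty$ bound.

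There is essentially no genuine obstacle here; the only minor points deserving care are the justification of differentiation under the integral sign in (i) and (ii), and the pointwise-in-$x$ interpretation of the ODE for $w$ in (iii), both of which are immediate from the classical regularity supplied by Lemma \ref{local-in-time}. The whole argument is purely structural and uses nothing beyond the divergence theorem, the homogeneous Neumann boundary conditions, and the sign of $z$.
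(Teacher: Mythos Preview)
Your proposal is correct and is exactly the standard argument. The paper itself does not give a proof but simply refers to \cite[Lemmas 2.2, 2.3 and 2.4]{FIWY16}, whose content is precisely the integration/divergence-theorem computation for (i) and (ii) and the pointwise ODE representation $w(x,t)=w_0(x)e^{-\int_0^t z(x,s)\,ds}$ together with $z\ge 0$ for (iii); so your approach coincides with the cited one.
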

In the subsequent  text, we shall denote by  $k_i (i=1,2,3,4)$ the constants appearing in Lemma \ref{heat group}  and by other  $c_i$ or $C$ or  $C_{\cdot}$ generic constants, which may vary from line to line. In most places, we shall write the commonly used short notations like
$$
\|f(\cdot,t)\|_{L^p}=\|f(\cdot,t)\|_{L^p(\Omega)}=\Bigr(\int_\Omega |f(x,t)|^pdx\Big)^{\frac{1}{p}}.
$$

\section{Boundedness in four and  higher dimensions}
In this section, our goal is to extend the boundedness in  $\leq 3$-dimensions of \cite{FIWY16} to in $\geq 4$-dimensions under some smallness condition on the initial data. Before proceeding, we first show a boundedness principle which reduces the hard task of proving the $L^\infty$- boundeness of \eqref{TI-model}   to the less hard task of the $L^{\frac{n}{4}+\epsilon}$-boundedness of its $u$ component.
\begin{theorem}\label{bddness-cond} If there  exist  some $\epsilon>0$  and $M>0$   such that
$$
\|u(\cdot, t)\|_{L^{\frac{n}{4}+\epsilon}(\Omega)}\leq M, \quad \forall t\in (0, T_m),
$$
then $T_m=\infty$ and the solution $(u,v,w,z)$ of \eqref{TI-model}  is bounded in the sense of \eqref{bdd-sol.}.  Moreover, there exist some $\alpha\in (0, 1)$  and constant $C=C(n, \epsilon, M)$ such that
\be\label{c-alpha-est}
\|u\|_{C^{2+\alpha, 1+\frac{\alpha}{2}}(\bar{\Omega}\times[t, t+1])}+\|v\|_{C^{2+\alpha, 1+\frac{\alpha}{2}}(\bar{\Omega}\times[t, t+1])}+\|z\|_{C^{2+\alpha, 1+\frac{\alpha}{2}}(\bar{\Omega}\times[t, t+1])}\leq C, \quad  \forall t\geq 1.
\ee
\end{theorem}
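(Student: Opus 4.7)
The plan is a bootstrap via the Neumann heat semigroup $L^p$--$L^q$ estimates in Lemma \ref{heat group}, beginning from the hypothesized $L^{\frac{n}{4}+\epsilon}$-bound on $u$ and successively upgrading the integrability of $z$, then $\nabla v$, and finally $u$ itself, until all reach $L^\infty$. Once this is achieved, Lemma \ref{basic prop} covers $w$, the extendibility criterion \eqref{T_m-cri} forces $T_m=\infty$, and standard parabolic regularity delivers the H\"older estimate \eqref{c-alpha-est}.

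First I would work on the $z$-component. Writing
$$
z(\cdot, t) = e^{-t}e^{t\Delta} z_0 + \int_0^t e^{-(t-s)} e^{(t-s)\Delta} u(\cdot,s)\, ds,
$$
the factor $e^{-(t-s)}$ inherited from the $-z$ reaction term makes $\int_0^t e^{-(t-s)}(t-s)^{-\alpha}\, ds$ uniformly finite for every $\alpha<1$. Combining this with Lemma \ref{heat group}(i) applied to $u(\cdot,s)-\bar u_0$ (using the mass conservation in Lemma \ref{basic prop}(i) to handle the mean part) yields a uniform bound on $\|z(\cdot,t)\|_{L^{q_1}}$ for every $q_1$ satisfying $\frac{1}{q_1}>\frac{1}{p_0}-\frac{2}{n}$, with $p_0=\frac{n}{4}+\epsilon$. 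Since each such step strictly enlarges the integrability exponent (roughly doubling it), a finite number of iterations produces $z$ uniformly bounded in $L^q$ for some $q>n$, and one last iteration pushes $z$ to $L^\infty$.

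With $z\in L^\infty$, $w\in L^\infty$ from Lemma \ref{basic prop}(iii), and $\bar v(t)=\bar v_0+\bar w_0-\bar w(t)$ bounded by the conservation law in Lemma \ref{basic prop}(ii), I would decompose $v=(v-\bar v)+\bar v$ and apply \eqref{Lp-3} to the mean-zero source $wz-\overline{wz}$ together with \eqref{Lp-2} on the variation-of-constants formula for $\nabla v$, obtaining $\|v(\cdot,t)\|_{L^\infty}+\|\nabla v(\cdot,t)\|_{L^\infty}\le C$. A standard semigroup bootstrap on
$$
u(\cdot,t)=e^{t\Delta}u_0-\int_0^t e^{(t-s)\Delta}\nabla\cdot(u\nabla v)(\cdot,s)\, ds,
$$
using estimate \eqref{Lp-4}, the H\"older-type bound $\|u\nabla v\|_{L^r}\le\|\nabla v\|_{L^\infty}\|u\|_{L^r}$, and a choice of $r>n$ ensuring $\frac{1}{2}+\frac{n}{2r}<1$, closes a Gronwall-type inequality for $\|u(\cdot,t)\|_{L^r}$; one further iteration upgrades this to $\|u(\cdot,t)\|_{L^\infty}\le C$. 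Feeding $u\in L^\infty$ back through the $v$- and $z$-equations completes \eqref{bdd-sol.}, whence \eqref{T_m-cri} forces $T_m=\infty$.

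Finally, for \eqref{c-alpha-est}, the uniform $L^\infty$ bounds turn the right-hand sides of the equations for $u$, $v$ and $z$ into uniformly bounded source/drift terms, so interior parabolic Schauder theory applied on moving windows $[t,t+1]$ yields the stated $C^{2+\alpha,1+\alpha/2}$ regularity. The main technical obstacle is the bootstrap itself: because $p_0=\frac{n}{4}+\epsilon$ may be small, one must quantify carefully that each semigroup iteration produces a strict, non-vanishing gain in the integrability exponent of $z$ (and later of $u$) and that the associated constants blow up neither with the number of iterations nor with $t\in(0,T_m)$; moreover, since the $v$-equation carries no damping, its treatment must rely on the conservation of $\int_\Omega(v+w)$ rather than on exponential semigroup decay, which adds a layer of bookkeeping.
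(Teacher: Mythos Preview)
Your overall bootstrap strategy via semigroup $L^p$--$L^q$ estimates is the right one and matches the paper's approach, but there is a genuine gap in your treatment of $z$. You write that ``a finite number of iterations produces $z$ uniformly bounded in $L^q$ for some $q>n$,'' but you cannot iterate the $z$-estimate in isolation: the source term in the $z$-equation is $u$, which remains only in $L^{p_0}$ with $p_0=\frac{n}{4}+\epsilon$ throughout, so every application of the semigroup estimate returns the same conclusion $z\in L^{q_1}$ with $\frac{1}{q_1}>\frac{1}{p_0}-\frac{2}{n}$, and nothing better. Consequently you do not actually have $z\in L^\infty$ (nor $\nabla v\in L^\infty$) at the point where you invoke them, and the subsequent ``Gronwall-type'' closure for $u$---which, absent damping in the $u$-equation and with a potentially large coefficient $\|\nabla v\|_{L^\infty}$, would not in any case yield a uniform-in-time bound---does not get off the ground.

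The paper's key observation is that no iteration is needed: a single pass through the chain already closes. From $u\in L^{p_0}$ with $p_0>\frac{n}{4}$ one gets $z\in L^q$ for some $q>\frac{n}{2}$; since $w\in L^\infty$, the source $wz$ in the $v$-equation lies in $L^q$, and Lemma~\ref{heat group}(ii) gives $\nabla v\in L^r$ for some $r>n$ (because $\frac{nq}{n-q}>n$ once $q>\frac{n}{2}$). The decisive step is then to jump directly to $u\in L^\infty$: set $S(t)=\sup_{s\le t}\|u(\cdot,s)\|_{L^\infty}$, estimate $\|u\nabla v\|_{L^\theta}\le\|\nabla v\|_{L^r}\,\|u\|_{L^\infty}^{1-a}\|u\|_{L^1}^{a}$ by H\"older and interpolation (with $\|u\|_{L^1}=\|u_0\|_{L^1}$ from mass conservation), and feed this into Lemma~\ref{heat group}(iv) with $\theta>n$ to obtain an inequality of the form $S(t)\le C+C\,S(t)^{\gamma}$ with $\gamma<1$, which bounds $S(t)$ by Young's inequality. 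Once $u\in L^\infty$, the maximum principle gives $z,v\in L^\infty$, a final semigroup estimate upgrades $\nabla v$ to $L^\infty$, and the H\"older regularity then follows by the standard bootstrap you describe.
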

\begin{proof} This is proven via variation-of-constants-formulas from semigroup representation. Its proof will  repeatedly utilize  \cite[Lemmas 2.3, 3.1, 3.2 and 3.4]{FIWY16}. To start off,  in view of the mass conservation   $\|u(\cdot, t)\|_{L^1}=\|u_0\|_{L^1}$, we can take $\epsilon=1-\frac{n}{4}$ for $n<4$. Hence, we shall proceed with   $p=\frac{n}{4}+\epsilon\geq 1$,  and then  from  \cite[Lemma 3.1]{FIWY16} we infer that
\be\label{z-lq}
\|z(\cdot, t)\|_{L^q}\leq C_z(n, \epsilon,q, M), \quad \forall t\in (0, T_m)
\ee
holds for all
\be\label{q-exp}
1\leq q<\frac{np}{n-2p} \text{  if  } p\leq \frac{n}{2}, \quad 1\leq q\leq \infty \text{  if  } p>\frac{n}{2}.
\ee
In the case of $p>\frac{n}{2}$, we take $q=\infty$ in \eqref{z-lq}, and then we see easily from \cite[Lemmas 3.2 and 3.3]{FIWY16} that $\|\nabla v(\cdot, t)\|_{L^\infty}$ and $\|u(\cdot, t)\|_{L^\infty}$ are uniformly bounded on $(0, T_m)$. Then we are simply done, cf, the end proof of this theorem.

Thus, we will continue with $\frac{n}{4}<p\leq\frac{n}{2}$, which then  entails $\frac{np}{n-2p}>\frac{n}{2}$. By H\"{o}lder inequality, we can take $q\in (\frac{n}{2}, n)$ in \eqref{z-lq}.  Hence, by the semigroup action on the $v$-equation \cite[Lemma 3.2]{FIWY16}, it follows that
\be\label{gradv-lr}
\|\nabla v(\cdot, t)\|_{L^r}\leq  C_v(n, \epsilon,r, M), \quad \forall t\in (0, T_m)
\ee
holds for all
\be\label{r-exp}
1\leq r<\frac{nq}{n-q}.
\ee
The choice of $q>\frac{n}{2}$ directly gives $\frac{nq}{n-q}>n$. Hence, one can choose $r>n$ in \eqref{gradv-lr} and use \cite[Lemma 3.4]{FIWY16} to see that $u$ is uniformly bounded:
\be\label{u-linfty}
\|u(\cdot, t)\|_{L^\infty}\leq  C_u(n, \epsilon,r, q, M), \quad \forall t\in (0, T_m).
\ee
Here, we would like to supply a short proof for \eqref{u-linfty}. Indeed, by the variation-of-constants formula for $u$, we have from (iv) of Lemma \ref{heat group} that
\be\label{u-linfty-est}\ba{ll}
&\|u(\cdot, t)\|_{L^\infty}\\[0.25cm]
&\leq \|e^{t\Delta} u_0\|_{L^\infty}+\int_0^t \|e^{(t-s)\Delta} \nabla\cdot(u(\cdot, s)\nabla v(\cdot, s))\|_{L^\infty}ds\\[0.25cm]
&\leq  \|u_0\|_{L^\infty}+k_4\int_0^t (1+(t-s)^{-\frac{1}{2}-\frac{n}{2\theta}})e^{-\lambda_1 (t-s)}\|u(\cdot, s)\nabla v(\cdot, s)\|_{L^\theta}ds.
\ea
\ee
By taking $\theta\in (\frac{r}{r+1}, r)$, we then infer from H\"{o}lder inequality,  interpolation inequality, \eqref{gradv-lr} and  the mass conservation of $u$, c.f. (i) of Lemma \ref{basic prop}  that
\be\label{ugrav -est}
\ba{ll}
\|u(\cdot, s)\nabla v(\cdot, s)\|_{L^\theta}&\leq \|u(\cdot, s)\|_{L^{\frac{r\theta}{r-\theta}}}\|\nabla v(\cdot, s)\|_{L^r}\\[0.25cm]
&\leq \|u(\cdot, s)\|_{L^\infty}^{1-\frac{r-\theta}{r\theta}}\|u(\cdot, s)\|_{L^1}^{\frac{r-\theta}{r\theta}}\|\nabla v(\cdot, s)\|_{L^r}\\[0.2cm]
&\leq  C_v(n, \epsilon,r, M)\|u_0\|_{L^1}^{\frac{r-\theta}{r\theta}} \|u(\cdot, s)\|_{L^\infty}^{1-\frac{r-\theta}{r\theta}}.
\ea
\ee
For any $t\in (0, T_m)$, set $S(t):= \sup_{s\in (0, t)}\|u(\cdot, s)\|_{L^\infty}$. Then, since $S(t)$ is nondecreasing for $t\in (0, T_m)$, we conclude from \eqref{u-linfty-est} and \eqref{ugrav -est} that
$$
S(t)
\leq  \|u_0\|_{L^\infty}+c_1S^{\frac{r(\theta-1)+\theta}{r\theta}}(t)\int_0^t (1+(t-s)^{-\frac{1}{2}-\frac{n}{2\theta}})e^{-\lambda_1 (t-s)}ds
$$
with $c_1=k_4C_v(n, \epsilon,q, M)\|u_0\|_{L^1}^{\frac{r-\theta}{r\theta}}$.  It follows from  $\theta>n$ the integral   $\int_0^t (1+(t-s)^{-\frac{1}{2}-\frac{n}{2\theta}})e^{-\lambda_1 (t-s)}ds:=c_2<\infty$. Therefore, by Young's inequality, for any $t\in (0, T_m)$,
\begin{equation*}
\begin{split}
S(t)
\leq  \|u_0\|_{L^\infty}+c_1c_2S(t)^{\frac{r(\theta-1)+\theta}{r\theta}}\leq &\|u_0\|_{L^\infty}+\frac{1}{2}S(t)\\
 &+\frac{r-\theta}{r\theta}\Bigr[\Bigr(\frac{2r\theta}{(r+1)\theta-r}\Bigr)^{\frac{r\theta}{(r+1)\theta-r}}c_1c_2\Bigr]^{\frac{r\theta}{r-\theta}},
 \end{split}
 \end{equation*}
which implies the uniform boundedness of $u$:
$$
\|u(\cdot, t)\|_{L^\infty}\leq S(t)\leq 2\Big\{ \|u_0\|_{L^\infty}+\frac{r-\theta}{r\theta}\Bigr[\Bigr(\frac{2r\theta}{(r+1)\theta-r}\Bigr)^{\frac{r\theta}{(r+1)\theta-r}}c_1c_2\Bigr]^{\frac{r\theta}{r-\theta}}\Bigr\},\ \forall t\in (0, T_m).
$$
 Now, it is fairly easy to show the boundedness of $\|v(\cdot, t)\|_{W^{1,\infty}}$ and $\|z(\cdot, t)\|_{L^\infty}$. Twice applications of the maximum principle  to  the  fourth  and then second equation in \eqref{TI-model} deduce $\|z(\cdot, t)\|_{L^\infty}\leq\|u(\cdot, t)\|_{L^\infty}$ and $\|v(\cdot, t)\|_{L^\infty}\leq\|w_0\|_{L^\infty}\|z(\cdot, t)\|_{L^\infty}$,  respectively.    Hence, by the semigroup action on the $v$-equation, it follows by taking $q>n$ in  \cite[Lemma 3.2]{FIWY16}  that $\|\nabla v(\cdot, t)\|_{L^\infty}$ is uniformly bounded on $(0, T_m)$. These together with the extendibility criterion \eqref{T_m-cri} forces $T_m=\infty$. The H\"{o}lder regularity \eqref{c-alpha-est} follows from standard bootstrap arguments involving interior parabolic regularity theory \cite{La}.
\end{proof}
By the mass conversation of $u$ in (i) of Lemma \ref{basic prop}, a direct application of Theorem \ref{bddness-cond} recovers the unconditional boundedness for $n\leq 3$ in \cite{FIWY16}.
\begin{corollary}For $n\leq 3$, any  solution $(u,v,w,z)$ of the tumor invasion model \eqref{TI-model}  is bounded in the sense of \eqref{bdd-sol.} and \eqref{c-alpha-est}.
\end{corollary}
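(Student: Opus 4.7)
The plan is to invoke Theorem \ref{bddness-cond} with the trivial $L^1$-bound on $u$ supplied by mass conservation. Since $n\leq 3$, we have $\frac{n}{4}\leq \frac{3}{4}<1$, so we may choose $\epsilon := 1-\frac{n}{4}>0$, giving $\frac{n}{4}+\epsilon=1$. This is exactly the borderline exponent at which the $L^{\frac{n}{4}+\epsilon}$-hypothesis of Theorem \ref{bddness-cond} reduces to an $L^1$-bound; indeed, the proof of Theorem \ref{bddness-cond} was written to accommodate this situation (recall the sentence ``we can take $\epsilon=1-\frac{n}{4}$ for $n<4$'').

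Next, I would quote part (i) of Lemma \ref{basic prop} to obtain
\[
\|u(\cdot,t)\|_{L^{\frac{n}{4}+\epsilon}(\Omega)}=\|u(\cdot,t)\|_{L^{1}(\Omega)}=\|u_0\|_{L^{1}(\Omega)}=:M<\infty, \qquad \forall t\in(0,T_m).
\]
Thus the boundedness criterion of Theorem \ref{bddness-cond} is satisfied with this choice of $\epsilon$ and $M$. Applying that theorem directly yields $T_m=\infty$ together with the $L^\infty$-boundedness of $u$, the $W^{1,\infty}$-boundedness of $v$, and the $L^\infty$-boundedness of $z$, i.e.\ exactly \eqref{bdd-sol.}, as well as the uniform parabolic H\"older estimate \eqref{c-alpha-est}. (The $L^\infty$-bound on $w$ needed in \eqref{bdd-sol.} is automatic from part (iii) of Lemma \ref{basic prop}.)

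There is essentially no obstacle here: the entire content of the corollary is packaged into the combination of Theorem \ref{bddness-cond} with the conservation law $\int_\Omega u=\int_\Omega u_0$. The only thing to verify is that the admissible range of $\epsilon$ in Theorem \ref{bddness-cond} covers the boundary case $\frac{n}{4}+\epsilon=1$ for $n\in\{1,2,3\}$, which it does by construction of the proof of that theorem. Consequently the proof is a one-line citation, and no additional estimates or regularization arguments are required.
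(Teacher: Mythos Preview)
Your proposal is correct and follows exactly the paper's own approach: the corollary is stated as an immediate consequence of Theorem \ref{bddness-cond} together with the mass conservation in Lemma \ref{basic prop}(i), with the choice $\epsilon=1-\frac{n}{4}$ for $n\leq 3$. No further argument is given or needed.
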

Next, with the boundedness criterion at hand, we shall show that under some smallness assumptions on the initial data, the classical solution will exist globally  with uniform-in-time bound in the case $n\geq 4$.
\begin{theorem}\label{BN4}
Let $n\geq 4$ and the initial data $(u_0,v_0, w_0,z_0)$ satisfy \eqref{Reg-initial}. If there exists $\varepsilon_0>0$  such that
\begin{equation}\label{Initial conditions}
\|u_0\|_{L^\frac{n}{4}(\Omega)}\leq \varepsilon\ ,\ \ \|z_0\|_{L^\frac{n}{2}(\Omega)}\leq \varepsilon\ \ \mathrm{and}\ \|\nabla v_0\|_{L^n(\Omega)}\leq \varepsilon
\end{equation}
for some $\varepsilon<\varepsilon_0$, then the solution $(u,v,w,z)$ of the tumor invasion model \eqref{TI-model} exists globally in time and is bounded according to \eqref{bdd-sol.}.
\end{theorem}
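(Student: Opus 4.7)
The plan is to leverage the boundedness criterion already established in Theorem~\ref{bddness-cond}: once I show $\sup_{t\in(0,T_m)}\|u(\cdot,t)\|_{L^{n/4+\epsilon}(\Omega)}<\infty$ for some $\epsilon>0$, global existence and uniform boundedness follow immediately. To obtain this from the smallness hypothesis \eqref{Initial conditions}, I would run a continuity (bootstrap) argument that simultaneously propagates smallness of $u$, $z$ and $\nabla v$ in norms close to the scaling-critical spaces $L^{n/4}$, $L^{n/2}$, $L^n$ that are naturally singled out by \eqref{TI-model}.

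Concretely, for a large constant $K>1$ to be fixed later, I would set
\begin{equation*}
T^\star := \sup\Bigl\{T\in(0,T_m):\; \|u(\cdot,t)\|_{L^{n/4}} + \|z(\cdot,t)\|_{L^{n/2}} + \|\nabla v(\cdot,t)\|_{L^n} \le K\varepsilon\ \text{for all}\ t\in[0,T]\Bigr\}.
\end{equation*}
By continuity and \eqref{Initial conditions}, $T^\star>0$. Using the variation-of-constants representations
\begin{align*}
u(\cdot,t) &= e^{t\Delta}u_0 - \int_0^t e^{(t-s)\Delta}\nabla\!\cdot\!(u\nabla v)(\cdot,s)\,ds,\\
z(\cdot,t) &= e^{-t}e^{t\Delta}z_0 + \int_0^t e^{-(t-s)}e^{(t-s)\Delta}u(\cdot,s)\,ds,\\
\nabla v(\cdot,t) &= \nabla e^{t\Delta}v_0 + \int_0^t \nabla e^{(t-s)\Delta}(wz)(\cdot,s)\,ds,
\end{align*}
together with the smoothing estimates of Lemma~\ref{heat group}, H\"older's inequality (to handle $u\nabla v$ and $wz$) and the uniform bound $\|w(\cdot,s)\|_{L^\infty}\le\|w_0\|_{L^\infty}$ from Lemma~\ref{basic prop}(iii), I would derive on $[0,T^\star]$ a coupled nonlinear system schematically of the form
\begin{equation*}
\|u\|\le c\varepsilon+c\,\|u\|\,\|\nabla v\|,\qquad \|z\|\le c\varepsilon+c\,\|u\|,\qquad \|\nabla v\|\le c\varepsilon+c\,\|z\|.
\end{equation*}
Inserting the defining bound $\le K\varepsilon$ on the right yields $\sup_{t\le T^\star}\bigl(\|u\|_{L^{n/4}}+\|z\|_{L^{n/2}}+\|\nabla v\|_{L^n}\bigr)\le c_0\varepsilon+c_1K^2\varepsilon^2$. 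Choosing first $K$ large (to absorb the linear propagation constants for $z$ and $\nabla v$) and then $\varepsilon_0$ so small that $c_1K^2\varepsilon_0\le c_0$, the right-hand side is strictly below $K\varepsilon$, contradicting the maximality of $T^\star$ unless $T^\star=T_m$. Thus the three critical norms remain $\le K\varepsilon$ throughout $(0,T_m)$.

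The real obstacle is that the triple $(n/4,n/2,n)$ is precisely the scaling-invariant one for \eqref{TI-model}, so each time-integral in the Duhamel loop lives at the borderline of integrability: with $p=n/4$ and the H\"older exponent $q=n/5$ forced by $\|u\nabla v\|_{L^q}\le\|u\|_{L^{n/4}}\|\nabla v\|_{L^n}$, the time-singularity coming out of Lemma~\ref{heat group}(iv) is exactly $(t-s)^{-1}$, which is not integrable. I would circumvent this in the Kato/Fujita fashion by actually controlling slightly super-critical norms $L^{n/4+\epsilon_1}$, $L^{n/2+\epsilon_2}$, $L^{n+\epsilon_3}$, paired with time weights $t^{\alpha_i}$ tuned to the scaling, and by exploiting $u_0,z_0\in L^\infty$ and $\nabla v_0\in L^\infty$ (from \eqref{Reg-initial}) together with interpolation to transfer the critical smallness in \eqref{Initial conditions} into these weighted super-critical bounds; the exponential factors $e^{-\lambda_1(t-s)}$ in Lemma~\ref{heat group} keep everything uniform on $(0,T_m)$. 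Once the weighted bootstrap closes, one recovers a time-uniform bound $\sup_{t\in(0,T_m)}\|u(\cdot,t)\|_{L^{n/4+\epsilon_1}}<\infty$, and Theorem~\ref{bddness-cond} immediately forces $T_m=\infty$ together with \eqref{bdd-sol.}, completing the proof.
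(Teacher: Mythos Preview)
Your proposal is correct and follows essentially the same strategy as the paper: a Kato--Fujita style continuity argument that propagates smallness of $u$, $z$, $\nabla v$ in slightly super-critical Lebesgue spaces (with matching short-time weights) via the Duhamel representations and the semigroup estimates of Lemma~\ref{heat group}, closing because the cross-diffusive nonlinearity $u\nabla v$ produces an $\varepsilon^2$, and then invoking Theorem~\ref{bddness-cond}. The paper carries out exactly this, bootstrapping on $\|u(\cdot,t)-e^{t\Delta}u_0\|_{L^\theta}$ for $\theta\in(\frac{n}{4},\frac{n}{3})$ with weight $(1+t^{-2+\frac{n}{2\theta}})e^{-\kappa t}$ and then passing through $\|z-\bar z\|_{L^p}$ ($p\in(\frac{n}{2},n)$) and $\|\nabla v\|_{L^q}$ ($q>n$).

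One ingredient the paper adds that you do not mention: before running the bootstrap it derives the uniform positive lower bound $z\ge\delta_1>0$ (from mass conservation of $u$ and the Neumann heat kernel lower bound), which yields the exponential decay $\|w(\cdot,t)\|_{L^\infty}\le\|w_0\|_{L^\infty}e^{-\delta_1 t}$; this lets them build an extra factor $e^{-\kappa t}$ into every bootstrap norm. This is not needed for mere boundedness---your reliance on $e^{-\lambda_1(t-s)}$ alone suffices to keep all Duhamel integrals uniformly bounded in $t$---but it is what makes the paper's particular bootstrap hypothesis (which includes $e^{-\kappa t}$) close as written. So your route is marginally more economical for the stated conclusion, while the paper's yields decay information for free.
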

\begin{proof} Solving  the third  equation in \eqref{TI-model} trivially shows
$$
w(x,t)=w_0(x)e^{-\int_{0}^t z(x,s)ds}.
$$
Using the similar argument as in \cite[Lemma 4.9]{FIWY16} and \cite[Lemma 2.9]{TW15-SIAM}, cf. also the discussion in Lemma \ref{decay-w} below, we can find a constant $\delta_1>0$ such that $z(t,x)>\delta_1$ for all $x\in\Omega$ and $t>1$. Hence,
\begin{equation}\label{convergence of w}
\|w\|_{L^\infty}\leq \|w_0\|_{L^\infty} e^{-\delta_1 t}.
\end{equation}
Let $0<\kappa<\min\{\delta_1,\lambda_1\}$. With $\varepsilon_0>0$ to be specified below  and assume that \eqref{Initial conditions} hold for $\varepsilon\in(0,\varepsilon_0)$.  We define
\begin{equation}\label{Definition of T}
\begin{split}
T:=\sup\Bigr\{\tilde{T}>0\Big|&\|u(\cdot,t)-e^{t\Delta }u_0\|_{L^{\theta}}\leq\varepsilon (1+t^{-2+\frac{n}{2\theta}}) e^{-\kappa t}\\
 &\mathrm{for \ all}\  t\in[0,\tilde{T})\ \mathrm{and} \ \ \mathrm{for \ all\ } \theta\in(\frac{n}{4},\frac{n}{3})\Bigr\}.
 \end{split}
\end{equation}
 It is easy to  check that $T$ is well defined and positive due to Lemma \ref{local-in-time}. In the sequel, we need to show that $T=\infty$. First,  since  $n\geq 4$, the smallness condition \eqref{Initial conditions} yields
\begin{equation}\label{bar-u}
\bar{u}_0=\frac{1}{|\Omega|}\int_\Omega u_0 \leq |\Omega|^{-\frac{4}{n}}\|u_0\|_{L^\frac{n}{4}}\leq |\Omega|^{-\frac{4}{n}}\varepsilon.
\end{equation}
Then from the definition of $T$ in \eqref{Definition of T} and \eqref{bar-u}, we deduce from Lemma \ref{heat group} (i)  that
\begin{equation}\label{convergence of u}
\begin{split}
\|u(\cdot,t)-\bar{u}_0\|_{L^{\theta}}
&\leq \|u(\cdot,t)-e^{t\Delta } u_0\|_{L^{\theta}}+\|e^{t\Delta} u_0-\bar{u}_0\|_{L^{\theta}}\\
&\leq \varepsilon (1+t^{-2+\frac{n}{2\theta}}) e^{-\kappa t}+ k_1(1+t^{-2+\frac{n}{2\theta}})e^{-\lambda_1 t}\|u_0-\bar{u}_0\|_{L^\frac{n}{4}} \\
&\leq \varepsilon c_1(1+t^{-2+\frac{n}{2\theta}}) e^{-\kappa t}, \quad \forall t\in (0, T), \quad \theta \in (\frac{n}{4}, \frac{n}{3}).
\end{split}
\end{equation}
From the fourth equation in \eqref{TI-model} and the mass conservation of $u$, it follows  that
\be\label{z-bar}
(\bar{z})_t=-\bar{z}+\bar{u}=-\bar{z}+\bar{u}_0.
\ee
This joins again the fourth equation in \eqref{TI-model} entail
\begin{equation}\label{CZ-1}
(z-\bar{z})_t=\Delta (z-\bar{z})-(z-\bar{z})+u-\bar{u}_0.
\end{equation}
Then the variation-of-constant formula applied to \eqref{CZ-1} shows that
\begin{equation*}
z(\cdot,t)-\bar{z}= e^{t(\Delta-1)}(z_0-\bar{z}_0)+\int_{0}^te^{(t-s)(\Delta-1)}(u(\cdot, s)-\bar{u}_0)ds,
\end{equation*}
which, together with \eqref{Lp-3}, \eqref{Initial conditions} and \eqref{convergence of u},  infers, for $\frac{n}{2}< p<n$, that
\begin{equation}\label{CZ-2}
\begin{split}
&\|z(\cdot,t)-\bar{z}\|_{L^p}\\
&\leq k_1 (1+t^{-1+\frac{n}{2p}}) e^{-(\lambda_1+1)t}\|z_0-\bar{z}_0\|_{L^{\frac{n}{2}}}\\
&\ \ \ +k_1\int_0^t\left(1+(t-s)^{-\frac{n}{2}\left(\frac{1}{\theta}-\frac{1}{p}\right)}\right)e^{-(\lambda_1+1) (t-s)} \|u(\cdot, s)-\bar{u}_0\|_{L^{\theta}}ds\\
&\leq  c_2(1+t^{-1+\frac{n}{2p}})e^{-(\lambda_1+1)t}\|z_0\|_{L^{\frac{n}{2}}} \\
&\ \ \ +k_1c_1\varepsilon\int_0^t\left(1+(t-s)^{-\frac{n}{2}\left(\frac{1}{\theta}-\frac{1}{p}\right)}\right)(1+s^{-2+\frac{n}{2\theta}})e^{-(\lambda_1+1) (t-s)} e^{-\kappa s}ds\\
&\leq c_2\varepsilon(1+t^{-1+\frac{n}{2p}})e^{-(\lambda_1+1)t}+k_1c_1C\varepsilon(1+t^{\min\{0,-1+\frac{n}{2p}\}})e^{-\kappa t}\\
&\leq c_3\varepsilon(1+t^{-1+\frac{n}{2p}})e^{-\kappa t},
\end{split}
\end{equation}
where we have chosen  $\theta$ such that  $\frac{1}{\theta}=\frac{1}{p}-\delta_2+\frac{2}{n}$  with $\delta_2\in (0, \frac{1}{p}-\frac{1}{n})$. In this way, since $p\in (\frac{n}{2}, n)$ we are ensured that  $-\frac{n}{2}(\frac{1}{\theta}-\frac{1}{p})>-1$ as well as $\theta\in (\frac{n}{4}, \frac{n}{3})$. The latter further gives $-2+\frac{n}{2\theta}>-1$. These in conjunction with  $\kappa<\lambda_1$ enable us  to employ  the  following algebraic result  \cite[Lemma 1.2]{Win10-JDE}:
\begin{lemma}\label{UL}
 Let $\alpha<1$, $\beta<1$ and $\gamma,\delta$ be positive constants satisfying $\gamma\neq\delta$. Then there exists some positive constant $C$ depending on $\alpha,\beta,\delta,\gamma$ such that, for all $t>0$,
\begin{equation*}
\int_0^t(1+(t-s)^{-\alpha})e^{-\gamma(t-s)}(1+s^{-\beta})e^{-\delta s}ds\leq C(1+t^{\min\{0,1-\alpha-\beta\}})e^{-\min\{\gamma,\delta\}t}.
\end{equation*}
\end{lemma}
In the next section,  at some places, we even need a refined version of such type result and will compute out  the constant $C$ explicitly.

By the semigroup representation of the second equation in \eqref{TI-model} and Lemma \ref{heat group} (ii), for all $q\in (n,\infty)$,  we deduce that
\begin{equation}\label{CV-1}
\begin{split}
&\|\nabla (v(\cdot,t)-e^{t\Delta}v_0)\|_{L^q}\\
&\leq \int_0^t\| \nabla e^{(t-s)\Delta} w(z-\bar{z})\|_{L^q}ds+\int_0^t\| \nabla e^{(t-s)\Delta} w\bar{z}\|_{L^q}ds\\
&\leq k_2\int_0^t\left(1+(t-s)^{-\frac{1}{2}-\frac{n}{2}\left(\frac{1}{p}-\frac{1}{q}\right)}\right)e^{-\lambda_1 (t-s)}\|w\|_{L^\infty}\|z-\bar{z}\|_{L^p}ds\\
&\ \ \ \ + k_2\int_0^t\left(1+(t-s)^{-\frac{1}{2}}\right)e^{-\lambda_1 (t-s)}\|w\|_{L^\infty}\|\bar{z}\|_{L^q}ds\\
&:=J_1+J_2.
\end{split}
\end{equation}
For any $q> n$, we first fix  $\delta_3\in (0, \frac{1}{q})$ and then take  $p$ satisfying $\frac{1}{p}=\frac{1}{n}+\frac{1}{q}-\delta_3$ so that $-\frac{1}{2}-\frac{n}{2}(\frac{1}{p}-\frac{1}{q})>-1$ and $\frac{n}{2}<p<n$. Therefore, combining  \eqref{convergence of w} and \eqref{CZ-2} and applying Lemma \ref{UL}, we conclude that
\begin{equation}\label{J1}
\begin{split}
J_1&\leq\varepsilon c_3 k_2 \|w_0\|_{L^\infty}\int_0^t\left(1+(t-s)^{-\frac{1}{2}-\frac{n}{2}\left(\frac{1}{p}-\frac{1}{q}\right)}\right)(1+s^{-1+\frac{n}{2p}})e^{-\lambda_1 (t-s)}e^{-\kappa s}ds\\
&\leq  \varepsilon c_4 (1+t^{\min\{0,-\frac{1}{2}+\frac{n}{2q}\}})e^{-\kappa t}\\
&=\varepsilon c_4(1+t^{-\frac{1}{2}+\frac{n}{2q}})e^{-\kappa t}.
\end{split}
\end{equation}
To estimate $J_2$, we first solve \eqref{z-bar} to obtain $\bar{z}=\bar{z}_0 e^{-t}+(1-e^{-t})\bar{u}_0$, and then
\begin{equation}\label{barz}
\begin{split}
\|\bar{z}\|_{L^q}
&\leq \|\bar{z}_0e^{-t}\|_{L^q}+\|(1-e^{-t})\bar{u}_0\|_{L^q}\\
&=e^{-t}|\Omega|^{-1+\frac{1}{q}}\|z_0\|_{L^1}+(1-e^{-t})|\Omega|^{-1+\frac{1}{q}}\|u_0\|_{L^1}\\
&\leq e^{-t}|\Omega|^{\frac{1}{q}-\frac{2}{n}}\|z_0\|_{L^\frac{n}{2}}+(1-e^{-t})|\Omega|^{\frac{1}{q}-\frac{4}{n}}\|u_0\|_{L^\frac{n}{4}}\\
&\leq (|\Omega|^{\frac{1}{q}-\frac{2}{n}}+|\Omega|^{\frac{1}{q}-\frac{4}{n}})\varepsilon\leq 2\max\{1, |\Omega|^{-\frac{1}{n}}, |\Omega|^{-\frac{3}{n}}\}\varepsilon.
\end{split}
\end{equation}
Using \eqref{convergence of w}, \eqref{barz},  Lemma \ref{UL} and noting that $0<\kappa<\min\{\lambda_1,\delta_1\}$, we have
\begin{equation}\label{J2}
\begin{split}
J_2
&\leq 2\varepsilon k_2\|w_0\|_{L^\infty} \max\{1, |\Omega|^{-\frac{1}{n}}, |\Omega|^{-\frac{3}{n}}\} \int_0^t \left(1+(t-s)^{-\frac{1}{2}}\right) e^{-\lambda_1 (t-s)}e^{-\delta_1 s}ds\\
&\leq \varepsilon c_5 e^{-\min\{\lambda_1,\delta_1\}t}\\
&\leq \varepsilon c_5(1+t^{-\frac{1}{2}+\frac{n}{2q}})e^{-\kappa t}.
\end{split}
\end{equation}
A substitution of \eqref{J1} and \eqref{J2} into \eqref{CV-1} gives rise to
\begin{equation}\label{V-v0}
\|\nabla (v(\cdot,t)-e^{\Delta t}v_0)\|_{L^q}\leq \varepsilon c_{6} (1+t^{-\frac{1}{2}+\frac{n}{2q}})e^{-\kappa t}.
\end{equation}
For $q>n\geq 4$, a simple application of \eqref{Initial conditions} and \eqref{Lp-1} yields
\begin{equation}\label{v0-2}
\begin{split}
\|\nabla e^{\Delta t}v_0\|_{L^q}\leq k_3 (1+t^{-\frac{1}{2}+\frac{n}{2q}})e^{-\lambda_1 t}\|\nabla v_0\|_{L^n} \leq \varepsilon c_7 (1+t^{-\frac{1}{2}+\frac{n}{2q}})e^{-\lambda_1 t}.
\end{split}
\end{equation}
Joining \eqref{V-v0} and   \eqref{v0-2}, we arrive at
\begin{equation}\label{VK}
\begin{split}
\|\nabla  v(\cdot,t)\|_{L^q}
&\leq \|\nabla e^{\Delta t} v_0\|_{L^q}+\|\nabla (v(\cdot,t)-e^{\Delta t}v_0)\|_{L^q}\\
&\leq \varepsilon c_{7}(1+t^{-\frac{1}{2}+\frac{n}{2q}})e^{-\lambda_1 t}+\varepsilon c_{6} (1+t^{-\frac{1}{2}+\frac{n}{2q}})e^{-\kappa t}\\
&\leq\varepsilon c_{8} (1+t^{-\frac{1}{2}+\frac{n}{2q}})e^{-\kappa t}, \quad \forall t\in (0, T), \quad q\in (n, \infty).
\end{split}
\end{equation}
Now, we apply variation-of-constant formula to  the first equation in \eqref{TI-model} to obtain
\begin{equation}\label{CU-1}
u(\cdot,t)=e^{t\Delta} u_0-\int_0^t e^{(t-s)\Delta}\nabla \cdot(u(\cdot,s)\nabla v(\cdot,s))ds.
\end{equation}
In light of Lemma \ref{heat group} (iv), we infer from \eqref{CU-1}  that
\begin{equation}\label{CU-2}
\begin{split}
&\|u(\cdot,t)-e^{t\Delta} u_0\|_{L^\theta}\\
&\leq k_4\int_0^t\left(1+(t-s)^{-\frac{1}{2}}\right)e^{-\lambda_1 (t-s)}\|u(\cdot,s)\nabla v(\cdot,s)\|_{L^{\theta}}ds\\
&\leq k_4\int_0^t\left(1+(t-s)^{-\frac{1}{2}}\right)e^{-\lambda_1 (t-s)}\|(u(\cdot,s)-\bar{u}_0)\nabla v(\cdot,s)\|_{L^{\theta}}ds\\
&\ \ \ \ +k_4\int_0^t\left(1+(t-s)^{-\frac{1}{2}}\right)e^{-\lambda_1 (t-s)}\|\bar{u}_0\nabla v(\cdot,s)\|_{L^{\theta}}ds\\
&:=I_1+I_2.
\end{split}
\end{equation}
Next, we  will estimate $I_1$ and $I_2$. As for $I_1$, for any $\theta\in(\frac{n}{4},\frac{n}{3})$ and  $ \theta_1\in(\theta,\frac{n}{3})$, we set
 $$
 \theta_2:=\theta_2(\theta,\theta_1)=\frac{\theta\theta_1}{\theta_1-\theta}\Longleftrightarrow\frac{1}{\theta_1}+\frac{1}{\theta_2}=\frac{1}{\theta}.
 $$
  Then it is a little tedious but easy to verify that $\frac{n}{4}<\theta_1<\frac{n}{3}$, $\theta_2>\theta_2( \frac{n}{4},\frac{n}{3})=n$,  $-2+\frac{n}{2\theta_1}<0$, $-\frac{1}{2}+\frac{n}{2\theta_2}<0$ and  $-\frac{5}{2}+\frac{n}{2\theta}>-1$. Then from  \eqref{convergence of u}, \eqref{VK},  Lemma $\ref{UL}$ and H\"{o}lder's inequality, we infer that
\begin{equation}\label{I1}
\begin{split}
I_1&\leq k_4\int_0^t\left(1+(t-s)^{-\frac{1}{2}}\right)e^{-\lambda_1 (t-s)}\|u-\bar{u}_0\|_{L^{\theta_1}}\|\nabla v\|_{L^{\theta_2}}ds\\
&\leq c_{9}\varepsilon^2 \int_0^t\left(1+(t-s)^{-\frac{1}{2}}\right)e^{-\lambda_1 (t-s)}(1+s^{-2+\frac{n}{2\theta_1}})e^{-\kappa s}\cdot (1+s^{-\frac{1}{2}+\frac{n}{2\theta_2}})e^{-\kappa s}ds\\
&\leq c_{10}\varepsilon^2\int_0^t\left(1+(t-s)^{-\frac{1}{2}}\right)e^{-\lambda_1 (t-s)}(1+s^{-\frac{5}{2}+\frac{n}{2\theta}})e^{-\kappa s}ds\\
& \leq c_{11}\varepsilon^2(1+t^{\min\{0,-2+\frac{n}{2\theta}\}})e^{-\kappa t}\\
&= c_{11}\varepsilon^2(1+t^{-2+\frac{n}{2\theta}})e^{-\kappa t}.
\end{split}
\end{equation}
From \eqref{bar-u} and \eqref{VK}, Lemma $\ref{UL}$ and H\"{o}lder's inequality,  we control  $I_2$ as follows:
\begin{equation}\label{I2}
\begin{split}
I_2&\leq k_4\int_0^t\left(1+(t-s)^{-\frac{1}{2}}\right)e^{-\lambda_1 (t-s)}\|\bar{u}_0\|_{L^{\theta_1}}\|\nabla v(\cdot,s)\|_{L^{\theta_2}}ds\\
& \leq k_4c_8|\Omega|^{\frac{1}{\theta_1}-\frac{4}{n}}\varepsilon^2\int_0^t\left(1+(t-s)^{-\frac{1}{2}}\right)e^{-\lambda_1 (t-s)}(1+s^{-\frac{1}{2}+\frac{n}{2\theta_2}})e^{-\kappa s}ds\\
&\leq k_4c_8 \max\{|\Omega|^{-\frac{1}{n}}, 1\}\varepsilon^2\int_0^t\left(1+(t-s)^{-\frac{1}{2}}\right)e^{-\lambda_1 (t-s)}(1+s^{-2+\frac{n}{2\theta_1}})\\
&\ \ \ \ \ \ \ \ \ \ \ \ \quad \quad \quad \quad \quad \quad \quad \times (1+s^{-\frac{1}{2}+\frac{n}{2\theta_2}})e^{-\kappa s}ds\\
& \leq  c_{12}\varepsilon^2 \int_0^t\left(1+(t-s)^{-\frac{1}{2}}\right)e^{-\lambda_1 (t-s)}(1+s^{-\frac{5}{2}+\frac{n}{2\theta}})e^{-\kappa s}ds\\
&\leq c_{13}\varepsilon^2(1+t^{-2+\frac{n}{2\theta}})e^{-\kappa t}.
\end{split}
\end{equation}
Substituting \eqref{I1} and \eqref{I2} into \eqref{CU-2}, we finally obtain the crucial estimate for $u$:
\begin{equation}\label{CU-3}
\|u(\cdot,t)-e^{t\Delta} u_0\|_{L^\theta}\leq c_{14}\varepsilon^2(1+t^{-2+\frac{n}{2\theta}})e^{-\kappa t}.
\end{equation}
Hence, upon choosing $\varepsilon_0<\frac{1}{2c_{14}}$ in \eqref{CU-3}, we argue by contraction from the definition of $T$ in \eqref{Definition of T} that $T=\infty$. Then the fact that $T\leq T_m$ (the maximal existence time) directly concludes that $T_m=\infty$, and so \eqref{bar-u} and \eqref{convergence of u} enable us to obtain that
\begin{equation}
\begin{split}
\|u(\cdot,t)\|_{L^{\theta}}
&\leq \|u(\cdot,t)-\bar{u}_0\|_{L^{\theta}}+\|\bar{u}_0\|_{L^{\theta}}\\
&\leq 2\varepsilon c_1 e^{-\kappa t}+|\Omega|^{\frac{1}{\theta}-\frac{4}{n}}\varepsilon \leq (2 c_1 +\max\{|\Omega|^{-\frac{1}{n}},1\})\varepsilon<\infty
\end{split}
\end{equation}
for all $t\geq 1$. While, for $t<1$, the result of local existence guarantees $\|u(\cdot,t)\|_{L^{\theta}}<\infty$.
Then the uniform boundedness of $\|u(\cdot,t)\|_{L^{\theta}}$ with $\theta>\frac{n}{4}$ together with  Theorem \ref{bddness-cond} implies the boundedness of solutions $(u,v,w,z)$ as specified in the theorem.
\end{proof}
\section{Boundedness implies exponential convergence}
\subsection{Boundedness implies convergence} To our further purpose, we here observe that the discussions in \cite{FIWY16} show that the boundedness of \eqref{TI-model} indeed  implies its convergence. Indeed, by integrating the third equation in \eqref{TI-model} over $\Omega\times (0,t)$ and recalling that $w$ is bounded, cf. Lemma \ref{basic prop} (iii), we get  the following observation:
\begin{lemma}[\cite{FIWY16}] The solution of \eqref{TI-model} has the property  that
$$
\int_0^\infty\int_\Omega w(x,t)z(x,t)dxdt<\infty.
$$
\end{lemma}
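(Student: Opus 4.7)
The plan is to integrate the third equation $w_t = -wz$ directly, both in space and in time, and exploit nonnegativity of $w$ to bound the right-hand side.

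More concretely, I would first integrate $w_t = -wz$ over $\Omega$ to obtain
$$
\frac{d}{dt}\int_\Omega w(x,t)\,dx = -\int_\Omega w(x,t)z(x,t)\,dx,
$$
and then integrate this identity over $(0,t)$ to get
$$
\int_0^t\!\int_\Omega w(x,s)z(x,s)\,dx\,ds \;=\; \int_\Omega w_0(x)\,dx - \int_\Omega w(x,t)\,dx.
$$
Because $w_0\ge 0$ implies $w\ge 0$ on $\Omega\times(0,\infty)$ (cf.\ Lemma \ref{local-in-time} and the explicit representation $w(x,t)=w_0(x)e^{-\int_0^t z(x,s)\,ds}$), the last term on the right is nonnegative, so the right-hand side is controlled by $\|w_0\|_{L^1(\Omega)}$, uniformly in $t$. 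Passing to the limit $t\to\infty$ via the monotone convergence theorem (the integrand $wz$ is nonnegative) then gives
$$
\int_0^\infty\!\int_\Omega w(x,t)z(x,t)\,dx\,dt \;\le\; \|w_0\|_{L^1(\Omega)} \;<\;\infty,
$$
which is the claim.

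There is essentially no obstacle: the bound is an instance of the elementary identity $\int_0^\infty (-\frac{d}{dt}\|w(\cdot,t)\|_{L^1})\,dt = \|w_0\|_{L^1}-\lim_{t\to\infty}\|w(\cdot,t)\|_{L^1}$ for the monotone nonincreasing quantity $\|w(\cdot,t)\|_{L^1}$. The $L^\infty$ boundedness of $w$ recorded in Lemma \ref{basic prop}(iii) is not needed for integrability in time; it only ensures that the pointwise mass $\int_\Omega w\,dx$ remains finite, which is already guaranteed by the regularity of the initial data. Thus the proof reduces to a one-line integration argument, and no delicate estimate is required.
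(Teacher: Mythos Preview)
Your proof is correct and is essentially the same approach as the paper's: integrate the third equation $w_t=-wz$ over $\Omega\times(0,t)$ and use nonnegativity of $w$ to bound the resulting space--time integral by $\|w_0\|_{L^1(\Omega)}$. Your remark that the $L^\infty$ bound on $w$ from Lemma~\ref{basic prop}(iii) is not actually needed here is accurate; nonnegativity alone suffices.
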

Combing this crucial property with the parabolic regularity \eqref{bddness-cond}, we can conclude the uniform convergence of solutions as argued in \cite{FIWY16}.
\begin{lemma} Any \it{bounded} solution $(u,v,w,z)$ of \eqref{TI-model} will converge according to
\be\label{convergence-u-z2}
\Bigr\|(u(\cdot, t),v(\cdot, t),w(\cdot, t), z(\cdot, t))-(\bar{u}_0, \bar{v}_0+\bar{w}_0, 0, \bar{u}_0)\Bigr\|_{L^\infty(\Omega)}\rightarrow 0,
\ee
as $t\rightarrow \infty$, where $\bar{u}_0, \bar{v}_0$ and $\bar{w}_0$ are defined by \eqref{average-u0-z0}.
\end{lemma}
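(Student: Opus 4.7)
The plan is to run an $\omega$-limit set / compactness argument, with fuel supplied by (i) the uniform parabolic Hölder regularity \eqref{c-alpha-est}, (ii) the time integrability $\int_0^\infty \int_\Omega wz\,dx\,dt<\infty$ from the preceding lemma, and (iii) the mass conservations of Lemma \ref{basic prop}; throughout I take the (implicit) hypothesis $u_0\not\equiv 0$, so $\bar u_0>0$ (otherwise $u\equiv 0$ and the $w$ component has no reason to decay). Given any sequence $t_n\to\infty$, the shifted solutions $(u,v,w,z)(\cdot,\cdot+t_n)$ are, by \eqref{c-alpha-est}, uniformly $C^{2+\alpha,1+\alpha/2}$ on compact subsets of $\bar\Omega\times\R$. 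Arzelà--Ascoli then extracts a subsequence converging in $C^{2,1}_{\mathrm{loc}}(\bar\Omega\times\R)$ to an eternal, nonnegative, bounded solution $(u_\infty,v_\infty,w_\infty,z_\infty)$ of \eqref{TI-model} on $\bar\Omega\times\R$, and the integrability of $\int_\Omega wz$ passes to the limit to yield $w_\infty z_\infty\equiv 0$ everywhere.

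Next I would identify this limit as the claimed equilibrium. Integrating the fourth equation over $\Omega$ and using mass conservation of $u$ gives the linear ODE $\bar z_t=-\bar z+\bar u_0$, whence $\bar z(t)\to\bar u_0>0$; consequently $\bar z_\infty\equiv \bar u_0$, and in particular $z_\infty\not\equiv 0$. A strong maximum principle / Harnack argument applied to the parabolic equation $z_{\infty,t}-\Delta z_\infty+z_\infty=u_\infty\geq 0$ with Neumann boundary condition then upgrades this to $z_\infty>0$ pointwise on $\bar\Omega\times\R$, so $w_\infty z_\infty\equiv 0$ forces $w_\infty\equiv 0$. With $w_\infty\equiv 0$ the $v_\infty$-equation reduces to the Neumann heat equation, whose only bounded eternal solutions are spatial constants (by Neumann eigenfunction expansion, each nonzero mode grows without bound as $t\to-\infty$); hence $v_\infty\equiv$ const and $\nabla v_\infty\equiv 0$. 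The $u_\infty$-equation then collapses to the heat equation, so $u_\infty\equiv$ const $=\bar u_0$ by mass conservation; feeding this back gives $z_\infty\equiv\bar u_0$, and the combined $(v,w)$ mass conservation pins $v_\infty\equiv \bar v_0+\bar w_0$.

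Finally, since every sequence $t_n\to\infty$ admits a subsequence along which $(u,v,w,z)(\cdot,t_n)\to(\bar u_0,\bar v_0+\bar w_0,0,\bar u_0)$ uniformly on $\bar\Omega$, a standard contradiction argument promotes this to the full uniform convergence \eqref{convergence-u-z2}. The step I expect to be most delicate is the dichotomy $w_\infty\equiv 0$ vs.\ $z_\infty\equiv 0$ in the identification of the limit: the positive average $\bar z_\infty=\bar u_0>0$ immediately rules out $z_\infty\equiv 0$, but converting this average statement into the pointwise positivity needed to conclude $w_\infty\equiv 0$ requires a careful strong-positivity argument for the linear parabolic equation satisfied by $z_\infty$, and this is precisely where the hypothesis $u_0\not\equiv 0$ is genuinely used.
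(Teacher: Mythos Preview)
Your $\omega$-limit framework is essentially correct and is indeed the approach the paper defers to \cite{FIWY16} for. One inaccuracy, however: estimate \eqref{c-alpha-est} covers only $u,v,z$, not $w$, so you cannot claim uniform $C^{2+\alpha,1+\alpha/2}$ bounds on the full quadruple from it. This is not fatal---since $w(x,\cdot)$ is nonincreasing and bounded it has a pointwise limit $w_*\ge 0$, and Dini's theorem on the compact set $\bar\Omega$ will give uniform convergence once you know $w_*\equiv 0$ everywhere---but it means the Arzel\`a--Ascoli step applies only to the triple $(u,v,z)$, with $w$ handled separately via monotonicity.

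The route actually taken in \cite{FIWY16} (as signalled by the paper's later citations of \cite[Lemma~4.9]{FIWY16} and \cite[Lemma~2.9]{TW15-SIAM}) is somewhat more direct than your eternal-solution identification: one first establishes a uniform pointwise lower bound $z(x,t)\ge\delta>0$ for all $x\in\bar\Omega$ and all large $t$, via the Neumann heat-kernel representation of $z$ together with the mass conservation $\int_\Omega u=\|u_0\|_{L^1}>0$. This immediately yields $w(x,t)\le\|w_0\|_{L^\infty}e^{-\delta(t-t_1)}\to 0$ in $L^\infty$, after which the convergences of $v$, $u$, and $z$ follow by more elementary arguments. Your approach is a valid alternative, but note that closing the implication ``$w_\infty z_\infty=0$ a.e.\ $\Rightarrow$ $w\to 0$ in $L^\infty$'' still ultimately requires this same eventual pointwise positivity of $z$ (upper semicontinuity of $w_*$ alone does not rule out $w_*>0$ on a null set), so you may find it cleaner to isolate and prove that lower bound directly at the outset.
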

\subsection{Exponential convergences and their convergence rates}
In this subsection, we will show that the bounded solution of \eqref{TI-model} converges not only uniformly but also exponentially. Beyond that, by carefully collecting the appearing constants, we shall calculate out their explicit rates of convergence in terms of initial datum $u_0$ and the first nonzero Neumann eigenvalue $\lambda_1$ of $-\Delta$ in $\Omega$. Hereafter, we shall assume that the solution quadruple $(u,v,w,z)$ of \eqref{TI-model} is bounded and so we have the convergence \eqref{convergence-u-z2}. Also, the condition $u_0\geq,\not\equiv 0$ will be assumed in order to avoid saying something trivial. Thanks to \eqref{convergence-u-z2}, we henceforth fix a $t_0\geq 0$ such that
\be\label{u-z-bdd}
\frac{\bar{u}_0}{2}\leq u,  z\leq \frac{3\bar{u}_0}{2}, \quad \frac{(\bar{v}_0+\bar{w}_0)}{2}\leq v\leq \frac{3(\bar{v}_0+\bar{w}_0)}{2}   \quad  \text{on  }  \bar{\Omega}\times[t_0, \infty).
\ee
\subsubsection{Convergence rate of $w$}
\begin{lemma}\label{decay-w} The solution component $w$ converges exponentially to  $0$:
\begin{equation}\label{decay-v1}
\|w(\cdot,t)\|_{L^\infty(\Omega)}\leq\|w_0\|_{L^\infty(\Omega)}e^{-\frac{\bar{u}_0}{2}(t-t_0)}, \quad \quad \forall t\geq t_0.
\end{equation}
\end{lemma}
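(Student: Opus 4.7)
The approach is to exploit the fact that the $w$-equation is an ordinary differential equation in $t$ for each fixed $x$, and then to insert the uniform lower bound on $z$ from \eqref{u-z-bdd}. No PDE techniques should be needed beyond what has already been established.

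First, I would fix $x\in\bar{\Omega}$ and view $t\mapsto w(x,t)$ as a solution to the linear ODE $w_t=-z(x,\cdot)\,w$. Integrating from $t_0$ to $t$ gives the explicit representation
$$
w(x,t)=w(x,t_0)\,\exp\!\left(-\int_{t_0}^{t} z(x,s)\,ds\right),\qquad t\geq t_0.
$$
Next, I would invoke \eqref{u-z-bdd} to bound the integrand below: since $z(x,s)\geq \bar{u}_0/2$ for all $(x,s)\in\bar{\Omega}\times[t_0,\infty)$, we have
$$
\int_{t_0}^{t} z(x,s)\,ds \;\geq\; \frac{\bar{u}_0}{2}(t-t_0),
$$
so that $w(x,t)\leq w(x,t_0)\,e^{-\frac{\bar{u}_0}{2}(t-t_0)}$.

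Finally, to replace $w(\cdot,t_0)$ by $w_0$ in the $L^\infty$-norm I would use the monotone non-increasing property of $w$ in time: because $w,z\geq 0$, the identity $w_t=-wz\leq 0$ forces $w(x,t_0)\leq w_0(x)\leq \|w_0\|_{L^\infty(\Omega)}$. Taking the supremum over $x\in\Omega$ then yields the claim. There is no real obstacle here; the lemma is essentially a direct consequence of \eqref{u-z-bdd} combined with the explicit solvability of the $w$-equation, and the only minor care needed is the passage from $w(\cdot,t_0)$ to $w_0$ via monotonicity in $t$.
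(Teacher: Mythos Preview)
Your proof is correct and follows essentially the same route as the paper: both solve the $w$-equation explicitly from $t_0$, insert the lower bound $z\geq \bar{u}_0/2$ from \eqref{u-z-bdd}, and then bound $w(\cdot,t_0)$ by $\|w_0\|_{L^\infty}$ (the paper cites Lemma~\ref{basic prop}(iii) for this, while you derive it directly from $w_t=-wz\leq 0$, which is the same observation).
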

\begin{proof} Solving the third equation in \eqref{TI-model}, we get
$$
w(x,t)=w(x,t_0)e^{-\int_{t_0}^t z(x,s)ds},
$$
which, coupled with the boundedness of $w$ in Lemma \ref{basic prop} (iii) and the fact that $z\geq \frac{\bar{u}_0}{2}$ on $\bar{\Omega}\times[t_0, \infty)$, c.f. \eqref{u-z-bdd},   trivially  leads to  the desired estimate \eqref{decay-v1}.
\end{proof}

\subsubsection{Convergence rate of $v$} In this subsection, we will compute the exponential  convergence rate of $v$.  In fact, we obtain  the following explicit decaying estimate for $v$.
\begin{lemma}\label{cv}
The solution component $v$ has  the following decay rate: for all $t\geq t_0$,
\begin{equation}\label{cv-1}
\|v(\cdot,t)-(\bar{v}_0+\bar{w}_0)\|_{L^\infty(\Omega)}\leq \Bigr[6k_1(\bar{v}_0+\bar{w}_0)  +\left(\frac{36k_1}{e}+1\right)\|w_0\|_{L^\infty(\Omega)}\Bigr]e^{-\min\{\lambda_1,\frac{\bar{u}_0}{3}\}(t-t_0)}.
\end{equation}
\end{lemma}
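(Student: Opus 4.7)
The plan is to split
$$v - (\bar v_0 + \bar w_0) = (v - \bar v(t)) - \bar w(t),$$
where the identity $\bar v(t) - \bar v_0 - \bar w_0 = -\bar w(t)$ is the mass conservation in Lemma \ref{basic prop}(ii). The spatial-mean piece is immediately controlled by $|\bar w(t)| \le \|w(t)\|_{L^\infty} \le \|w_0\|_{L^\infty} e^{-\bar u_0(t-t_0)/2}$ from Lemma \ref{decay-w}, and accounts for the ``$+\,1$'' summand in the stated constant. Everything therefore reduces to bounding $W := v - \bar v$ in $L^\infty$.

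Averaging $v_t = \Delta v + wz$ gives $\bar v_t = \overline{wz}$, so $W$ satisfies the zero-mean forced heat equation $W_t = \Delta W + (wz - \overline{wz})$. Variation-of-constants from $t_0$, combined with the mean-zero Neumann semigroup estimate of Lemma \ref{heat group}(i) applied with $p = q = \infty$ (which furnishes the contraction factor $2k_1 e^{-\lambda_1 \cdot}$), yields
$$\|W(t)\|_{L^\infty} \le 2k_1 e^{-\lambda_1(t-t_0)}\,\|W(t_0)\|_{L^\infty} + 2k_1 \int_{t_0}^t e^{-\lambda_1(t-s)} \|wz - \overline{wz}\|_{L^\infty}\,ds.$$
Using the enclosure \eqref{u-z-bdd}, the initial datum is bounded by $\|W(t_0)\|_{L^\infty} \le 2\|v(t_0)\|_{L^\infty} \le 3(\bar v_0 + \bar w_0)$, producing the $6k_1(\bar v_0 + \bar w_0)e^{-\lambda_1(t-t_0)}$ contribution. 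For the integrand, triangle inequality plus $\|z\|_{L^\infty} \le 3\bar u_0/2$ from \eqref{u-z-bdd} and Lemma \ref{decay-w} give $\|wz - \overline{wz}\|_{L^\infty} \le 2\|w\|_{L^\infty}\|z\|_{L^\infty} \le 3\bar u_0 \|w_0\|_{L^\infty} e^{-\bar u_0(s-t_0)/2}$.

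The delicate step is then to evaluate the convolution $I(t) := \int_{t_0}^t e^{-\lambda_1(t-s)} e^{-\bar u_0(s-t_0)/2}\,ds$ in a way that produces the output rate $\nu := \min\{\lambda_1, \bar u_0/3\}$. Setting $\nu$ as above, both residual exponents $\lambda_1 - \nu \ge 0$ and $\bar u_0/2 - \nu \ge \bar u_0/6$ are nonnegative, so factoring $e^{-\nu(t-t_0)}$ out of the product reduces the integral to at most $\tfrac{6}{\bar u_0} e^{-\nu(t-t_0)}$ in the generic subcase where the two rates are separated. When the two rates nearly coalesce, one instead bounds $I(t) \le (t-t_0) e^{-\bar u_0(t-t_0)/2}$ and invokes the elementary inequality $\sup_{x\ge 0} x e^{-cx} = (ce)^{-1}$ with $c = \bar u_0/6$ to get $I(t) \le \tfrac{6}{e\bar u_0} e^{-\bar u_0(t-t_0)/3}$; this is the source of the $e^{-1}$ appearing in the stated $\tfrac{36 k_1}{e}$. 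Multiplying by the prefactor $2k_1 \cdot 3\bar u_0 = 6k_1 \bar u_0$ exactly absorbs the $\bar u_0^{-1}$, and using $\lambda_1, \bar u_0/2 \ge \nu$ to upgrade $e^{-\lambda_1(t-t_0)}$ and $e^{-\bar u_0(t-t_0)/2}$ to $e^{-\nu(t-t_0)}$ assembles the three contributions into the claimed estimate. No structural obstacle arises beyond this constant-tracking; the safety margin $\bar u_0/6$ --- equivalently, picking $\bar u_0/3$ rather than $\bar u_0/2$ as the output rate --- is precisely what renders the convolution constant independent of $\lambda_1$ and $\bar u_0$.
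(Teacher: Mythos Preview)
Your proof is correct and follows essentially the same route as the paper: decompose $v-(\bar v_0+\bar w_0)=(v-\bar v)-\bar w$ via mass conservation, control $\bar w$ by Lemma~\ref{decay-w}, and estimate $v-\bar v$ through variation-of-constants together with the mean-zero semigroup bound \eqref{Lp-3} at $p=q=\infty$, reducing matters to the exponential convolution $I(t)$ handled with the margin $\bar u_0/2-\bar u_0/3=\bar u_0/6$.

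One remark on the constant tracking: your clean ``separated'' argument already gives the uniform bound $I(t)\le \tfrac{6}{\bar u_0}\,e^{-\nu(t-t_0)}$ in \emph{all} cases, which yields the prefactor $36k_1$ rather than $\tfrac{36k_1}{e}$. The sharper $\tfrac{6}{e\bar u_0}$ you extract from the ``coalescing'' case relies on the intermediate inequality $I(t)\le (t-t_0)e^{-\bar u_0(t-t_0)/2}$, but that inequality is only valid when $\lambda_1\ge \bar u_0/2$; for, say, $\lambda_1=\bar u_0/5$ one has $I(t)\sim \tfrac{10}{3\bar u_0}e^{-\lambda_1(t-t_0)}>\tfrac{6}{e\bar u_0}e^{-\lambda_1(t-t_0)}$ for large $t$. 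The paper's own derivation glosses over the same point. So strictly speaking only the constant $36k_1$ (without the $e^{-1}$) is uniformly justified---but this is a cosmetic issue and does not affect the exponential rate $\min\{\lambda_1,\bar u_0/3\}$, which your argument establishes cleanly.
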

\begin{proof}From the second equation in \eqref{TI-model}, we get
\begin{equation}\label{cv-2}
\left(v-\bar{v}\right)_t=\Delta(v-\bar{v})+wz-\overline{wz},
\end{equation}
by noting that
\begin{equation*}\label{cv-5}
\frac{d}{dt}\bar{v}=\overline{wz}.
\end{equation*}
The variation-of-constant formula applied to \eqref{cv-2} leads to
\begin{equation*}
v(\cdot,t)-\bar{v}=e^{(t-t_0)\Delta}\left(v(\cdot,t_0)-\bar{v}(t_0)\right)+\int_{t_0}^te^{(t-s)\Delta}\left(wz-\overline{wz}\right)ds, \quad t\geq t_0,
\end{equation*}
which, together with \eqref{Lp-3},  \eqref{u-z-bdd} and \eqref{decay-v1},  gives
\begin{equation}\label{cv-3}
\begin{split}
&\|v(\cdot,t)-\bar{v}\|_{L^\infty}\\
&\leq\|e^{(t-t_0)\Delta}\left(v(\cdot,t_0)-\bar{v}(t_0)\right)\|_{L^\infty}+\int_{t_0}^t\|e^{(t-s)\Delta}(wz-\overline{wz})\|_{L^\infty}ds\\
&\leq  2 k_1\|v(\cdot,t_0)-\bar{v}(t_0)\|_{L^\infty} e^{-\lambda_1 (t-t_0)}+2k_1\int_{t_0}^t e^{-(t-s)\lambda_1}\|(wz)(s)-\overline{wz}(s))\|_{L^\infty}ds\\
&\leq 6k_1(\bar{v}_0+\bar{w}_0) e^{-\lambda_1 (t-t_0)}+6\bar{u}_0k_1\|w_0\|_{L^\infty}\int_{t_0}^t e^{-(t-s)\lambda_1}e^{-\frac{\bar{u}_0}{2}(s-t_0)}ds\\
&\leq 6k_1(\bar{v}_0+\bar{w}_0)  e^{-\lambda_1 (t-t_0)}+\frac{36}{e}k_1\|w_0\|_{L^\infty} e^{-\frac{\bar{u}_0}{3}(t-t_0)}\\
&\leq 6k_1\Bigr[(\bar{v}_0+\bar{w}_0)  +\frac{6}{e}\|w_0\|_{L^\infty}\Bigr]e^{-\min\{\lambda_1,\frac{\bar{u}_0}{3}\}(t-t_0)},
\end{split}
\end{equation}
where we have used the following algebraic calculations:
$$
\ba{ll}
\int_{t_0}^t e^{-(t-s)\lambda_1}e^{-\frac{\bar{u}_0}{2}(s-t_0)}ds&=\begin{cases}
(t-t_0) e^{-\lambda_1(t-t_0)}, &\text{ if  } \lambda_1=\frac{\bar{u}_0}{2}\\
\frac{1}{\lambda_1-\frac{\bar{u}_0}{2}}\Bigr[e^{-\lambda_1(t-t_0)-e^{-\frac{\bar{u}_0}{2}(t-t_0)}}\Bigr], & \text{ if } \lambda_1\neq \frac{\bar{u}_0}{2}
\end{cases}\\[0.25cm]
 &\leq  (t-t_0) e^{-\frac{\bar{u}_0}{2}(t-t_0)}\leq \frac{6}{\bar{u}_0e} e^{-\frac{\bar{u}_0}{3}(t-t_0)}, \quad \quad \forall t\geq t_0.
\ea
$$
On the other hand, by the mass conservation  of $v+w$ in  Lemma \ref{basic prop} (ii)  and  \eqref{decay-v1} of Lemma \ref{decay-w}, we deduce
\begin{equation*}
\|\bar{v}-(\bar{v}_0+\bar{w}_0)\|_{L^\infty}=\|-\bar{w}\|_{L^\infty(\Omega)}\leq\|w\|_{L^\infty}\leq \|w_0\|_{L^\infty}e^{-\frac{\bar{u}_0}{2}(t-t_0)} ,
\end{equation*}
which together with \eqref{cv-3} leads to  \eqref{cv-1}.
\end{proof}
\subsubsection{Convergence rate of $u$}In this subsection, we shall illustrate that $u$ stabilizes exponentially to its spatial mean. To this end,  a decay rate of  $\nabla v$ in $L^p(\Omega)$ for arbitrary  $p>2$ is needed and is essential in our subsequent analyses.
\begin{lemma}\label{decay-v8} For any $2<p<\infty$, the gradient of $v$ has the following decay estimate:
\begin{equation}\label{decay-v9}
\|\nabla v(\cdot,t)\|_{L^p(\Omega)}\leq C e^{-\frac{1}{2}\min\{\lambda_1, \frac{\bar{u}_0}{3}\}(t-t_0)}, \quad \forall t\geq t_0,
\end{equation}
where
\be\label{C-formula}
C=C(\|\nabla v(\cdot,t_0)\|_{L^p}, \bar{u}_0,\lambda_1)= \Bigr[2k_3\|\nabla v(\cdot,t_0)\|_{L^p}+\frac{3}{2}k_2\bar{u}_0\|w_0\|_{L^\infty}|\Omega|^{\frac{1}{p}}\max\{A, B\}\Bigr]
\ee
with $A=A(\bar{u}_0,\lambda_1)$ and $B=B(\bar{u}_0)$ defined respectively by \eqref{A-formula} and \eqref{B-formula} below.
\end{lemma}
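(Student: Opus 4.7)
The plan is to apply the Duhamel (variation-of-constants) formula to the $v$-equation starting from the convergence time $t_0$, and then deploy the Neumann heat-semigroup gradient estimates of Lemma \ref{heat group} in conjunction with the already-established exponential decay of $w$ from Lemma \ref{decay-w} and the pointwise control $\tfrac{\bar{u}_0}{2}\leq z\leq \tfrac{3\bar{u}_0}{2}$ from \eqref{u-z-bdd}. Since $v_t=\Delta v+wz$, one writes
\begin{equation*}
\nabla v(\cdot,t)=\nabla e^{(t-t_0)\Delta}v(\cdot,t_0)+\int_{t_0}^t \nabla e^{(t-s)\Delta}(wz)(\cdot,s)\,ds,\qquad t\geq t_0,
\end{equation*}
and estimates the two pieces separately in $L^p(\Omega)$ for $p>2$.

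For the homogeneous piece, since $p\geq 2$, Lemma \ref{heat group}(iii) with $q=p$ yields immediately
$\|\nabla e^{(t-t_0)\Delta}v(\cdot,t_0)\|_{L^p}\leq 2k_3\, e^{-\lambda_1(t-t_0)}\|\nabla v(\cdot,t_0)\|_{L^p}$, which is exactly the first summand in \eqref{C-formula}. For the Duhamel piece, I would apply Lemma \ref{heat group}(ii) with $q=p$ to obtain the pointwise-in-$s$ bound $k_2(1+(t-s)^{-1/2})e^{-\lambda_1(t-s)}\|(wz)(\cdot,s)\|_{L^p}$, and then estimate the nonlinearity by $\|w(\cdot,s)\|_{L^\infty}\|z(\cdot,s)\|_{L^p}$. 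Lemma \ref{decay-w} supplies $\|w(\cdot,s)\|_{L^\infty}\leq \|w_0\|_{L^\infty}e^{-\bar{u}_0(s-t_0)/2}$, while \eqref{u-z-bdd} gives $\|z(\cdot,s)\|_{L^p}\leq \tfrac{3\bar{u}_0}{2}|\Omega|^{1/p}$; together these produce the prefactor $\tfrac{3}{2}k_2\bar{u}_0\|w_0\|_{L^\infty}|\Omega|^{1/p}$ visible in \eqref{C-formula}, multiplied by the convolution integral
\begin{equation*}
\mathcal{I}(t):=\int_{t_0}^t \bigl(1+(t-s)^{-1/2}\bigr)e^{-\lambda_1(t-s)}e^{-\bar{u}_0(s-t_0)/2}\,ds.
\end{equation*}

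The last step is to dominate $\mathcal{I}(t)$ by $\max\{A,B\}\,e^{-\frac{1}{2}\min\{\lambda_1,\bar{u}_0/3\}(t-t_0)}$, which I would do by splitting into the cases $\lambda_1\neq\bar{u}_0/2$ and $\lambda_1=\bar{u}_0/2$. In the non-degenerate case the elementary antiderivative gives $\int_{t_0}^t e^{-\lambda_1(t-s)}e^{-\bar{u}_0(s-t_0)/2}ds=(e^{-\bar{u}_0(t-t_0)/2}-e^{-\lambda_1(t-t_0)})/(\lambda_1-\bar{u}_0/2)$, from which one reads off the constant $A=A(\bar{u}_0,\lambda_1)$; in the degenerate case the integral collapses to $(t-t_0)e^{-\lambda_1(t-t_0)}$, and using $\sup_{\tau\geq 0}\tau e^{-\delta\tau}=1/(\delta e)$ with $\delta$ a small fraction of $\bar{u}_0$ one extracts the constant $B=B(\bar{u}_0)$. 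The singular factor $(t-s)^{-1/2}$ is integrable and is absorbed through a second application of the same elementary identity. The main obstacle is entirely bookkeeping rather than analysis: the weakened exponent $\tfrac{1}{2}\min\{\lambda_1,\bar{u}_0/3\}$ in \eqref{decay-v9}, in place of the sharp $\min\{\lambda_1,\bar{u}_0/2\}$, is dictated precisely by the degenerate case, where we must trade a polynomial prefactor $(t-t_0)$ for a slightly slower exponential decay in order to produce the single clean constant $C$ of the form asserted in \eqref{C-formula}.
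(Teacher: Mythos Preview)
Your proposal is correct and follows essentially the same route as the paper: Duhamel representation of $v$ from time $t_0$, Lemma~\ref{heat group}(iii) with $q=p$ for the homogeneous term, Lemma~\ref{heat group}(ii) with $q=p$ for the inhomogeneous term, and then $\|wz\|_{L^p}\leq \tfrac{3}{2}\bar u_0|\Omega|^{1/p}\|w_0\|_{L^\infty}e^{-\bar u_0(s-t_0)/2}$ via \eqref{u-z-bdd} and Lemma~\ref{decay-w}. The one discrepancy is in how you package the time integral $\mathcal{I}(t)$: the paper does \emph{not} split into the degenerate case $\lambda_1=\bar u_0/2$ versus $\lambda_1\neq\bar u_0/2$, but rather into $\lambda_1\geq\bar u_0/2$ versus $\lambda_1<\bar u_0/2$; the constant $B$ in \eqref{B-formula} arises from the former case (bounding $\int_0^{t-t_0}(1+\tau^{-1/2})e^{-(\lambda_1-\bar u_0/2)\tau}d\tau\leq (t-t_0)+2(t-t_0)^{1/2}$ and then trading the polynomial for $e^{\bar u_0(t-t_0)/3}$), while $A$ in \eqref{A-formula} is simply defined as the supremum that makes the bound work in the latter case, with no elementary antiderivative needed for the singular piece. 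Your identification of $A$ with the non-degenerate case and $B$ with the degenerate case is therefore not how the paper's constants are organized, though your argument would of course yield an estimate of the same form with different explicit constants.
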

\begin{proof}Applying the variation of constants representation to the second equation  in  \eqref{TI-model},  then,  for all $t\geq t_0$, we have
\begin{equation*}
v(\cdot,t)=e^{(t-t_0)\Delta}v(\cdot,t_0)-\int_{t_0}^t e^{(t-s)\Delta}w(\cdot,s)z(\cdot,s)ds,
\end{equation*}
 which implies
\begin{equation}\label{decay-v10}
\|\nabla v(\cdot,t)\|_{L^p}\leq\|\nabla e^{(t-t_0)\Delta} v(\cdot,t_0)\|_{L^p}+\int_{t_0}^t\|\nabla e^{(t-s)\Delta } w(\cdot,s)z(\cdot,s)\|_{L^p}ds.
\end{equation}
For $p>2$, we  use the heat semigroup property  \eqref{Lp-1} to derive
\begin{equation}\label{decay-v11}
\|\nabla e^{(t-t_0)\Delta} v(\cdot,t_0)\|_{L^p}\leq 2k_3\|\nabla v(\cdot,t_0)\|_{L^p} e^{-\lambda_1(t-t_0)}, ~\forall t\geq t_0.
\end{equation}
 Moreover, using \eqref{Lp-2} and the H\"{o}lder inequality, one infers from \eqref{u-z-bdd} and \eqref{decay-v1} that
\begin{equation}\label{decay-v12}
\begin{split}
&\int_{t_0}^t\|\nabla e^{(t-s)\Delta}w(\cdot,s)z(\cdot,s)\|_{L^p}ds\\
&\leq k_2\int_{t_0}^t \left(1+(t-s)^{-\frac{1}{2}}\right)e^{-\lambda_1(t-s)}\|w(\cdot,s)z(\cdot,s)\|_{L^p}ds\\
&\leq k_2|\Omega|^{\frac{1}{p}}\int_{t_0}^t\left(1+(t-s)^{-\frac{1}{2}}\right)e^{-\lambda_1(t-s)}\|w(\cdot,s)\|_{L^\infty}\|z(\cdot,s)\|_{L^\infty}ds\\
&\leq \frac{3}{2}k_2\bar{u}_0\|w_0\|_{L^\infty}|\Omega|^{\frac{1}{p}}\int_{t_0}^t\left(1+(t-s)^{-\frac{1}{2}}\right)e^{-\lambda_1(t-s)} e^{-\frac{\bar{u}_0}{2}(s-t_0)}ds\\
&=\frac{3}{2}k_2\bar{u}_0\|w_0\|_{L^\infty}|\Omega|^{\frac{1}{p}}e^{-\frac{\bar{u}_0}{2}(t-t_0)}\int_0^{t-t_0}\left(1+\tau^{-\frac{1}{2}}\right)e^{-(\lambda_1-\frac{\bar{u}_0}{2})\tau}d\tau.
\end{split}
\end{equation}
By straightforward  computations, we have
$$
\ba{ll}
\int_0^{t-t_0}\left(1+\tau^{-\frac{1}{2}}\right)e^{-(\lambda_1-\frac{\bar{u}_0}{2})\tau}d\tau&\leq \begin{cases}
(t-t_0) +2(t-t_0)^{\frac{1}{2}} , &\text{ if  } \lambda_1\geq \frac{\bar{u}_0}{2},\\
Ae^{(\frac{\bar{u}_0}{2}-\frac{\lambda_1}{2})(t-t_0)}, & \text{ if } \lambda_1< \frac{\bar{u}_0}{2},
\end{cases}\\[0.25cm]
 &\leq \begin{cases}
B e^{\frac{\bar{u}_0}{3}(t-t_0)}, &\text{ if  } \lambda_1\geq \frac{\bar{u}_0}{2},\\
Ae^{(\frac{\bar{u}_0}{2}-\frac{\lambda_1}{2})(t-t_0)}, & \text{ if } \lambda_1< \frac{\bar{u}_0}{2},
\end{cases}
\ea
$$
with
\be\label{A-formula}
A=A(\bar{u}_0,\lambda_1)=\sup_{s> 0}\Bigr\{e^{-(\frac{\bar{u}_0}{2}-\frac{\lambda_1}{2})s}\int_0^s\left(1+\tau^{-\frac{1}{2}}\right)e^{-(\lambda_1-\frac{\bar{u}_0}{2})\tau}d\tau\Bigr\}<\infty
\ee
and
\be\label{B-formula}
 B=B(\bar{u}_0)=\sup_{s>0}\Bigr\{(s+2s^{\frac{1}{2}})e^{-\frac{\bar{u}_0}{3}s}\Bigr\}<\infty.
\ee
Then inserting these inequalities  into $\eqref{decay-v12}$ , we derive
\be\label{decay-v14}
\begin{split}
&\int_{t_0}^t\|\nabla e^{(t-s)\Delta}w(\cdot,s)z(\cdot,s)\|_{L^p}ds\\
&\leq \frac{3}{2}k_2\bar{u}_0\|w_0\|_{L^\infty}|\Omega|^{\frac{1}{p}}\max\{A, B\}e^{-\frac{1}{2}\min\{\lambda_1, \frac{\bar{u}_0}{3}\}(t-t_0)}.
\end{split}
\ee
Substituting \eqref{decay-v14} and \eqref{decay-v11} into \eqref{decay-v10}, we end up with  \eqref{decay-v9}.
\end{proof}
Now,  we are well prepared to show the rate of exponential convergence of $u$.
\begin{lemma} \label{decay-u}The $u$-component has an exponential decay estimate that
\begin{equation}\label{decay-u1}
\|u(\cdot,t)-\bar{u}_0\|_{L^\infty(\Omega)}\leq \left(5k_1+\frac{3}{2} k_4CD\right)\bar{u}_0e^{-\frac{1}{2}\min\{\lambda_1, \frac{\bar{u}_0}{3}\}(t-t_0)}, \quad \forall t\geq t_0,
\end{equation}
where $C$ and $D$ are defined by \eqref{C-formula} and \eqref{D-formula}, respectively.
\end{lemma}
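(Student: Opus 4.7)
The plan is to represent $u(\cdot,t)$ via the variation-of-constants formula for the $u$-equation on the interval $[t_0,t]$ and exploit the mass conservation $\bar{u}(t)\equiv\bar{u}_0$, which guarantees that $\bar{u}_0$ is the natural candidate steady state and is preserved by the Neumann heat semigroup. Concretely, I would write
$$u(\cdot,t)-\bar{u}_0=e^{(t-t_0)\Delta}\bigl(u(\cdot,t_0)-\bar{u}_0\bigr)-\int_{t_0}^t e^{(t-s)\Delta}\nabla\cdot\bigl(u(\cdot,s)\nabla v(\cdot,s)\bigr)\,ds,$$
using $e^{\tau\Delta}\bar{u}_0=\bar{u}_0$. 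Then I would estimate the two pieces separately in $L^\infty(\Omega)$.

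For the homogeneous piece, note that $u(\cdot,t_0)-\bar{u}_0$ has vanishing spatial mean, so Lemma \ref{heat group}(i) with $p=q=\infty$ is applicable and yields $2k_1 e^{-\lambda_1(t-t_0)}\|u(\cdot,t_0)-\bar{u}_0\|_{L^\infty}$. Bounding $\|u(\cdot,t_0)-\bar{u}_0\|_{L^\infty}\leq\|u(\cdot,t_0)\|_{L^\infty}+\bar{u}_0\leq\tfrac{5\bar{u}_0}{2}$ by the triangle inequality and \eqref{u-z-bdd} produces the leading contribution $5k_1\bar{u}_0\,e^{-\lambda_1(t-t_0)}$, which already decays at least as fast as the claimed rate since $\lambda_1\geq\tfrac{1}{2}\min\{\lambda_1,\tfrac{\bar{u}_0}{3}\}$.

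For the inhomogeneous convolution, I would apply Lemma \ref{heat group}(iv) with $p=\infty$ and a fixed $q>n$, which gives an integrable singularity $(t-s)^{-\frac{1}{2}-\frac{n}{2q}}$ and, by H\"older's inequality together with \eqref{u-z-bdd} and Lemma \ref{decay-v8} at the chosen exponent $q$,
$$\|u(\cdot,s)\nabla v(\cdot,s)\|_{L^q}\leq\|u(\cdot,s)\|_{L^\infty}\|\nabla v(\cdot,s)\|_{L^q}\leq\tfrac{3\bar{u}_0}{2}\,C\,e^{-\frac{1}{2}\min\{\lambda_1,\frac{\bar{u}_0}{3}\}(s-t_0)},$$
with $C$ as in \eqref{C-formula}. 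Feeding this into the time integral and invoking Lemma \ref{UL} (with $\alpha=\tfrac{1}{2}+\tfrac{n}{2q}<1$, $\beta=0$, $\gamma=\lambda_1$, $\delta=\tfrac{1}{2}\min\{\lambda_1,\tfrac{\bar{u}_0}{3}\}$, noting $\gamma>\delta$ so the hypothesis $\gamma\neq\delta$ holds and $\min\{\gamma,\delta\}=\delta$) yields
$$\int_{t_0}^t\bigl(1+(t-s)^{-\frac{1}{2}-\frac{n}{2q}}\bigr)e^{-\lambda_1(t-s)}e^{-\frac{1}{2}\min\{\lambda_1,\frac{\bar{u}_0}{3}\}(s-t_0)}\,ds\leq D\,e^{-\frac{1}{2}\min\{\lambda_1,\frac{\bar{u}_0}{3}\}(t-t_0)},$$
where $D$ is the constant specified in \eqref{D-formula}. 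Combining this with the prefactor $\tfrac{3}{2}k_4\bar{u}_0\,C$ and the homogeneous contribution gives the desired bound $(5k_1+\tfrac{3}{2}k_4 CD)\bar{u}_0\,e^{-\frac{1}{2}\min\{\lambda_1,\frac{\bar{u}_0}{3}\}(t-t_0)}$.

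The main technical obstacle is balancing the choice of $q$: it must lie above $n$ so that the gradient-type singularity from Lemma \ref{heat group}(iv) is integrable, yet small enough that Lemma \ref{decay-v8} applies (which it does for any finite $q>2$). Beyond this, one must be careful to track constants through both the $L^\infty$ bound on $u$ (contributing the factor $\tfrac{3\bar{u}_0}{2}$) and the output of Lemma \ref{UL} (the explicit constant $D$ hidden in \eqref{D-formula}), in order to recover the displayed prefactor $5k_1+\tfrac{3}{2}k_4 CD$ exactly. Everything else is a bookkeeping exercise combining mass conservation, the heat semigroup smoothing estimates, and the already-proved decay rates for $w$ and $\nabla v$.
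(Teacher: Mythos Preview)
Your proposal is correct and follows essentially the same route as the paper: variation-of-constants on $[t_0,t]$, the semigroup estimate (i) on the mean-zero initial piece, and estimate (iv) together with Lemma~\ref{decay-v8} on the convolution term. The only refinement needed to land exactly on the displayed constant is that the ``fixed $q>n$'' must in fact be $q=3n$, since the definition of $D$ in \eqref{D-formula} carries the specific singularity exponent $-\tfrac{2}{3}=-\tfrac{1}{2}-\tfrac{n}{2\cdot 3n}$; with any other $q$ the argument still works but produces a different constant than the one named $D$.
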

\begin{proof} It follows from the first equation in  \eqref{TI-model} that
\begin{equation}\label{decay-u2}
(u-\bar{u})_t=\Delta(u-\bar{u})-\nabla\cdot(u\nabla v).
\end{equation}
The variation-of-constant formula then shows from \eqref{decay-u2} that
\begin{equation*}
u(\cdot,t)-\bar{u}=e^{(t-t_0)\Delta}(u(\cdot,t_0)-\bar{u}(t_0))-\int_{t_0}^te^{(t-s)\Delta}\nabla\cdot(u(\cdot,s)\nabla v(\cdot,s))ds,
\end{equation*}
which gives rise to
\begin{equation}\label{decay-u3}
\begin{split}
\|u(\cdot,t)-\bar{u}\|_{L^\infty}
\leq&\|e^{(t-t_0)\Delta}(u(\cdot,t_0)-\bar{u}(t_0))\|_{L^\infty}\\
&+\int_{t_0}^t\|e^{(t-s)\Delta}\nabla\cdot(u(\cdot,s)\nabla v(\cdot,s))\|_{L^\infty}ds.
\end{split}
\end{equation}
Using \eqref{Lp-3} and  noting that $\int_\Omega (u-\bar{u})=0$ and $\bar{u}=\bar{u}_0$, one has from \eqref{u-z-bdd} that
\begin{equation}\label{decay-u4}
\|e^{(t-t_0)\Delta}(u(\cdot,t_0)-\bar{u})\|_{L^\infty}\leq 2k_1\|(u(\cdot,t_0)-\bar{u}_0)\|_{L^\infty} e^{-\lambda_1(t-t_0)}\leq 6k_1\bar{u}_0e^{-\lambda_1(t-t_0)}.
\end{equation}
Furthermore,  according  to \eqref{Lp-4},  \eqref{u-z-bdd} and Lemma \ref{decay-v8} with $p=3n>2$, we deduce
\begin{equation}\label{decay-u5}
\begin{split}
&\int_{t_0}^t\|e^{(t-s)\Delta}\nabla\cdot(u(\cdot,s)\nabla v(\cdot,s))\|_{L^\infty}ds\\
&\leq k_4\int_{t_0}^t\left(1+(t-s)^{-\frac{1}{2}-\frac{n}{2*3n}}\right)e^{-\lambda_1(t-s)}\|u(\cdot,s)\|_{L^\infty}\|\nabla v(\cdot,s)\|_{L^{3n}}ds\\
&\leq\frac{3\bar{u}_0}{2} k_4C\int_{t_0}^t\left(1+(t-s)^{-\frac{2}{3}}\right)e^{-\lambda_1(t-s)}  e^{-\frac{1}{2}\min\{\lambda_1, \frac{\bar{u}_0}{3}\}(s-t_0)}ds\\
&=\frac{3\bar{u}_0}{2} k_4Ce^{-\frac{1}{2}\min\{\lambda_1, \frac{\bar{u}_0}{3}\}(t-t_0)}\int_0^{t-t_0}(1+\tau^{-\frac{2}{3}})e^{-(\lambda_1-\frac{1}{2}\min\{\lambda_1, \frac{\bar{u}_0}{3}\})\tau}d\tau\\
&\leq \frac{3\bar{u}_0}{2} k_4CDe^{-\frac{1}{2}\min\{\lambda_1, \frac{\bar{u}_0}{3}\}(t-t_0)}
\end{split}
\end{equation}where
\be\label{D-formula}
D=D(\bar{u}_0, \lambda_1)= \sup_{s>0}\int_0^s(1+\tau^{-\frac{2}{3}})e^{-(\lambda_1-\frac{1}{2}\min\{\lambda_1, \frac{\bar{u}_0}{3}\})\tau}d\tau<\infty.
\ee
 Substituting $\eqref{decay-u4}$ and $\eqref{decay-u5}$ into $\eqref{decay-u3}$,  we obtain \eqref{decay-u1}.
\end{proof}
\subsubsection{Convergence of $z$}
Finally, we compute the rate of exponential convergence of $z$.
\begin{lemma} \label{cz}The $z$-component has the following exponential decay estimate:
\begin{equation}\label{cz-1}
\|z(\cdot,t)-\bar{u}_0\|_{L^\infty(\Omega)}\leq \bar{u}_0\Bigr\{ \frac{5}{2}+2k_1\Bigr[3+\frac{ (10k_1+3k_4CD)}{2(\lambda_1+1)-\min\{\lambda_1, \frac{\bar{u}_0}{3}\}}\Bigr]\Bigr\}e^{-\min\{1, \frac{\lambda_1,}{2} \frac{\bar{u}_0}{6}\}(t-t_0)}
\end{equation}
for all $t\geq t_0$, where $C$ and $D$ are given by \eqref{C-formula} and \eqref{D-formula}.
\end{lemma}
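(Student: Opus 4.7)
The strategy is to view $z-\bar{u}_0$ as the solution of a linear inhomogeneous heat-type equation with absorption and apply the Duhamel formula on $[t_0,t]$. Since $\bar{u}_0$ is constant and mass conservation (Lemma \ref{basic prop}(i)) forces $\bar{u}(t)\equiv \bar{u}_0$, the fourth equation in \eqref{TI-model} rewrites as
\[
(z-\bar{u}_0)_t=\Delta(z-\bar{u}_0)-(z-\bar{u}_0)+(u-\bar{u}_0),
\]
so that
\[
z(\cdot,t)-\bar{u}_0=e^{(t-t_0)(\Delta-1)}\bigl(z(\cdot,t_0)-\bar{u}_0\bigr)+\int_{t_0}^t e^{(t-s)(\Delta-1)}\bigl(u(\cdot,s)-\bar{u}_0\bigr)\,ds.
\]
I will bound each piece via the Neumann heat semigroup estimate \eqref{Lp-3} combined with the $u$-decay already proved in Lemma \ref{decay-u}.

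For the initial-data contribution, I further split $z(\cdot,t_0)-\bar{u}_0=\bigl(z(\cdot,t_0)-\bar{z}(t_0)\bigr)+\bigl(\bar{z}(t_0)-\bar{u}_0\bigr)$. The first summand has zero spatial mean, so \eqref{Lp-3} with $p=q=\infty$ gives a factor $2k_1e^{-(\lambda_1+1)(t-t_0)}$ times $\|z(\cdot,t_0)-\bar{z}(t_0)\|_{L^\infty}$, the latter being at most $3\bar{u}_0$ by \eqref{u-z-bdd}; this produces $6k_1\bar{u}_0\,e^{-(\lambda_1+1)(t-t_0)}$, which corresponds to the ``$2k_1\cdot 3$'' inside the bracket of the stated bound. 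The constant summand is invariant under the pure Laplacian part of $e^{(t-t_0)(\Delta-1)}$ and decays only through the absorption exponential, yielding $|\bar{z}(t_0)-\bar{u}_0|e^{-(t-t_0)}\leq \tfrac{5}{2}\bar{u}_0\,e^{-(t-t_0)}$ via the crude triangle bound $|\bar{z}(t_0)|+\bar{u}_0\leq \tfrac{3}{2}\bar{u}_0+\bar{u}_0$ inherited from \eqref{u-z-bdd}; this accounts for the leading $\tfrac{5}{2}\bar{u}_0$ term.

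For the forcing integral, the crucial observation is that $u(\cdot,s)-\bar{u}_0$ is also mean-zero, so \eqref{Lp-3} at $p=q=\infty$ again supplies a factor $2k_1e^{-(\lambda_1+1)(t-s)}$. Inserting the $L^\infty$-decay $\|u(\cdot,s)-\bar{u}_0\|_{L^\infty}\leq(5k_1+\tfrac{3}{2}k_4CD)\bar{u}_0\,e^{-\alpha(s-t_0)}$ with $\alpha:=\tfrac{1}{2}\min\{\lambda_1,\bar{u}_0/3\}$ from Lemma \ref{decay-u}, the integrand becomes $2k_1(5k_1+\tfrac{3}{2}k_4CD)\bar{u}_0\,e^{-(\lambda_1+1)(t-s)}e^{-\alpha(s-t_0)}$, and a direct integration in $s$ is bounded by $(\lambda_1+1-\alpha)^{-1}e^{-\alpha(t-t_0)}$. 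Using the identity $\lambda_1+1-\alpha=\tfrac{1}{2}\bigl(2(\lambda_1+1)-\min\{\lambda_1,\bar{u}_0/3\}\bigr)$ and absorbing the factor $2$ into the numerator produces exactly the fraction $\dfrac{10k_1+3k_4CD}{2(\lambda_1+1)-\min\{\lambda_1,\bar{u}_0/3\}}$ appearing in the statement.

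Summing the three contributions and factoring out $\bar{u}_0$ yields the bracketed constant $\{\tfrac{5}{2}+2k_1[\,3+\cdots\,]\}$. Each of the three decay exponents $\lambda_1+1$, $1$, and $\alpha=\min\{\lambda_1/2,\bar{u}_0/6\}$ is bounded below by $\min\{1,\lambda_1/2,\bar{u}_0/6\}$, so all three exponentials are dominated by $e^{-\min\{1,\lambda_1/2,\bar{u}_0/6\}(t-t_0)}$ and the advertised rate emerges. The only technical check is $\lambda_1+1>\alpha$, needed for finiteness of the convolution integral, which is automatic since $\alpha\leq\lambda_1/2<\lambda_1+1$. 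Beyond this and the routine chasing of constants, I do not anticipate any genuine analytic obstacle, as the heavy lifting (the exponential decay rate of $u$) has already been performed in Lemma \ref{decay-u}.
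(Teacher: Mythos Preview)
Your proposal is correct and follows essentially the same approach as the paper: Duhamel on $[t_0,t]$, the semigroup bound \eqref{Lp-3} on the mean-zero pieces, and Lemma \ref{decay-u} for the forcing term. The only cosmetic difference is that the paper first works with $z-\bar{z}$ and then handles $\bar{z}-\bar{u}_0$ separately via the ODE $(\bar{z})_t=-\bar{z}+\bar{u}_0$, whereas you work directly with $z-\bar{u}_0$ and split the initial datum into its mean-zero and constant parts; the resulting terms and constants coincide.
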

\begin{proof}Notice from the fourth equation in \eqref{TI-model} that $(\bar{z})_t=-\bar{z}+\bar{u}=-\bar{z}+\bar{u}_0$, we have
\begin{equation}\label{cz-2}
(z-\bar{z})_t=\Delta (z-\bar{z})-(z-\bar{z})+u-\bar{u}_0.
\end{equation}
The variation-of-constant formula applied to \eqref{cz-2} gives that
\begin{equation}\label{cz-3}
z(\cdot,t)-\bar{z}= e^{(t-t_0)(\Delta-1)}(z(\cdot,t_0)-\bar{z}(t_0))+\int_{t_0}^te^{(t-s)(\Delta-1)}(u(\cdot, s)-\bar{u}_0)ds.
\end{equation}
Now, it follows from \eqref{Lp-3} and \eqref{u-z-bdd} that
\begin{equation}\label{cz-4}
\begin{split}
\|e^{(t-t_0)(\Delta-1)}(z(\cdot,t_0)-\bar{z}(t_0))\|_{L^\infty}&\leq 2k_1e^{-(\lambda_1+1)(t-t_0)}\|z(\cdot,t_0)-\bar{z}(t_0))\|_{L^\infty}\\
&\leq 6k_1\bar{u}_0 e^{-(\lambda_1+1)(t-t_0)}
\end{split}
\end{equation}
and, thanks to \eqref{decay-u1},
\begin{equation}\label{cz-5}
\begin{split}
&\Bigr\|\int_{t_0}^te^{(t-s)(\Delta-1)}(u(\cdot, s)-\bar{u}_0)ds\Bigr\|_{L^\infty}\\
&\leq 2k_1\int_{t_0}^te^{-(\lambda_1+1)(t-s)}\|u(\cdot,s)-\bar{u}_0\|_{L^\infty}ds\\
&\leq2k_1(5k_1+\frac{3}{2} k_4CD)\bar{u}_0e^{-\frac{1}{2}\min\{\lambda_1, \frac{\bar{u}_0}{3}\}(t-t_0)}\int_{0}^{t-t_0}e^{-[(\lambda_1+1)-\frac{1}{2}\min\{\lambda_1, \frac{\bar{u}_0}{3}\}]s}ds\\
&\leq  \frac{ 2k_1(10k_1+3k_4CD)\bar{u}_0}{2(\lambda_1+1)-\min\{\lambda_1, \frac{\bar{u}_0}{3}\}}e^{-\frac{1}{2}\min\{\lambda_1, \frac{\bar{u}_0}{3}\}(t-t_0)}.
\end{split}
\end{equation}
By taking $L^\infty$-norm  on both sides of \eqref{cz-3} and using \eqref{cz-4} and \eqref{cz-5}, we arrive at
\begin{equation}\label{cz-6}
\|z(\cdot,t)-\bar{z}\|_{L^\infty}\leq 2k_1\bar{u}_0\Bigr[3+\frac{ (10k_1+3k_4CD)}{2(\lambda_1+1)-\min\{\lambda_1, \frac{\bar{u}_0}{3}\}}\Bigr]e^{-\frac{1}{2}\min\{\lambda_1, \frac{\bar{u}_0}{3}\}(t-t_0)},  \forall t\geq t_0.
\end{equation}
On the other hand, we note from $(\bar{z})_t=-\bar{z}+\bar{u}$ and $\bar{u}=\bar{u}_0$  that
$$
\bar{z}(t)-\bar{u}_0=[\bar{z}(t_0)-\bar{u}_0]e^{-(t-t_0)}+e^{-t}\int_{t_0}^t(\bar{u}(s)-\bar{u}_0)e^sds=[\bar{z}(t_0)-\bar{u}_0]e^{-(t-t_0)}.
$$
Then \eqref{u-z-bdd}  implies
$$
\|\bar{z}(t)-\bar{u}_0\|_{L^\infty}\leq \|\bar{z}(t_0)-\bar{u}_0\|_{L^\infty}e^{-(t-t_0)}\leq \frac{5\bar{u}_0}{2}e^{-(t-t_0)}.
$$
This combined with \eqref{cz-6} gives the desired estimate  \eqref{cz-1}.
\end{proof}

The main exponential decay estimates in \eqref{exp-convergence-u-z} of Theorem \ref{mthm} follow a collection of Lemmas \ref{decay-w} -\ref{cz} with perhaps some large constants $m_i$. At the end of this paper, we remark that Lemma \ref{cz} may be shown alternatively via comparison principle.

\textbf{Acknowledgments}  The research of H.Y. Jin was supported by Project Funded by the NSF of China (No. 11501218), China Postdoctoral Science Foundation (No. 2015M572302) and the Fundamental Research Funds for the Central Universities (No. 2015ZM088). The research of  T. Xiang  was  funded by China Postdoctoral Science Foundation (No.2015M570190), the Fundamental Research Funds for the Central Universities, and the Research Funds of Renmin University of China (No.15XNLF10) and NSF of China (No. 11571363).


\end{document}